\numberwithin{equation}{section}
\numberwithin{equation}{section}
\newtheorem{thm}{\indent Theorem}[section]
\newtheorem{lem}[thm]{\indent Lemma}
\newtheorem{prop}[thm]{\indent Proposition}
\newtheorem{rmk}{{\indent\bf Remark}}[section]
\newcommand{\mb}{\mbox}
\newcommand{\hs}{\hspace}
\newcommand{\vs}{\vspace}
\newcommand{\ttiny}{\fontsize{5pt}{\baselineskip}\selectfont}
\newcommand{\strl}[2]{\stackrel{\mbox{\ttiny $#1$}}{#2}}
\newcommand{\td}{\tilde}
\newcommand{\fr}{\frac}
\newcommand{\fkm}{{\mathfrak m}}
\newcommand{\fkp}{{\mathfrak p}}
\newcommand{\fkr}{{\mathfrak r}}
\newcommand{\edd}{\end{document}}
\newcommand{\be}{\begin{equation}}
\newcommand{\ee}{\end{equation}}
\newcommand{\bsl}{\backslash}
\newcommand{\lagl}{\langle}
\newcommand{\ragl}{\rangle}
\newcommand{\lmx}{\left(\begin{matrix}}
\newcommand{\rmx}{\end{matrix}\right)}
\newcommand{\ldt}{\left|\begin{matrix}}
\newcommand{\rdt}{\end{matrix}\right|}
\newcommand{\hess}{{\rm Hess\,}}
\newcommand{\tr}{{\rm tr\,}}
\newcommand{\const}{{\rm const}}
\newcommand{\bbr}{{\mathbb R}}
\newcommand{\bbh}{{\mathbb H}}
\newcommand{\bbs}{{\mathbb S}}
\newcommand{\mo}{M\"obius }
\newcommand{\ba}{\begin{array}}
\newcommand{\ea}{\end{array}}
\newcommand{\nnm}{\nonumber}
\newcommand{\beal}{\begin{align}}
\newcommand{\eal}{\end{align}}
\newcommand{\bea}{\begin{eqnarray}}
\newcommand{\eea}{\end{eqnarray}}
\newcommand{\spn}{{\rm Span\,}}
\newcommand{\stx}[2]{\strl{(#1)}{#2}}
\newcommand{\Ba}{\stx{1}{B}{}\!}
\newcommand{\Baa}{\stx{a}{B}{}\!}
\begin{document}

\title[Submanifolds in the unit sphere with
parallel Blaschke tensor]{On the immersed
submanifolds in the unit sphere\\ with parallel Blaschke tensor II} %title of paper and the running head option

\author[X. X. Li]{Xingxiao Li} %first author's name and the running head option

\author[H. R. Song]{Hongru Song ${}^*$} %second author's name and the running head option

\dedicatory{}

%%%%%%%%%%%%%%% footnote %%%%%%%%%%%%%%%%
\subjclass[2000]{ %2000 MSC numbers
Primary 53A30; Secondary 53B25. }
%In case \subjclass[2000] command is not effective
%(or the version of amsart.cls is old), write as follows instead:
%\renewcommand{\thefootnote}{\fnsymbol{footnote}}
%\footnote[0]{2000\textit{ Mathematics Subject Classification}.
%Primary 00; Secondary 00.}
%
\keywords{ %key words and phrases
parallel Blaschke tensor, vanishing \mo form, constant scalar curvature, parallel mean curvature vector.}
\thanks{ %acknowledgment of support etc. if any
Research supported by
National Natural Science Foundation of China (No. 11171091, 11371018).}
%%%%%%%%%%%% Authors' addresses %%%%%%%%%%%%%
\address{% First Author
Department of Mathematics
\endgraf Henan Normal University \endgraf Xinxiang 453007, Henan
\endgraf P.R. China}
\email{xxl$@$henannu.edu.cn}

\address{% Second Author
Department of Mathematics
\endgraf Henan Normal University \endgraf Xinxiang 453007, Henan
\endgraf P.R. China} %
\email{yaozheng-shr@163.com}

%\address{% Third Author
%Mathematical Institute \endgraf Tohoku University \endgraf Sendai
%980-8578 \endgraf Japan } \email{author3@math.tohoku.ac.jp}

%\address{% Fourth Author
%Mathematical Institute \endgraf Tohoku University \endgraf Sendai
%980-8578 \endgraf Japan } \email{author4@math.tohoku.ac.jp}
%%%%%%%%%%%%%%%%%%%%%%%%%%%%%%%%%%%%%%%%%
\thanks{${}^*$The corresponding author.}
\begin{abstract}
As is known, the Blaschke tensor $A$ (a symmetric covariant $2$-tensor) is one of the fundamental \mo invariants in the \mo differential geometry of submanifolds in the unit sphere $\bbs^n$, and the eigenvalues of $A$ are referred  to as the Blaschke eigenvalues. In this paper, we continue our job for the study on the submanifolds in $\bbs^n$ with parallel Blaschke tensors which we simply call {\em Blaschke parallel submanifolds} to find more examples and seek a complete classification finally. The main theorem of this paper is the classification of Blaschke parallel submanifolds in $\bbs^n$ with exactly three distinct Blaschke eigenvalues. Before
proving this classification we define, as usual, a new class of examples.
\end{abstract}

\maketitle

\section{Introduction}

Let $\bbs^n(r)$ be the standard $n$-dimensional sphere in the $(n+1)$-dimensional Euclidean space $\bbr^{n+1}$ of radius $r$,
and denote $\bbs^n=\bbs^n(1)$.
Let $\bbh^n(c)$ be the $n$-dimensional hyperbolic space of constant
curvature $c<0$ defined by
$$
\bbh^n(c)=\{y=(y_0,y_1)\in\bbr^{n+1}_1\,;\ \lagl y,y\ragl_1=\fr1c,\
y_0>0\},
$$
where, for any integer $N\geq 2$,
$\bbr^N_1\equiv\bbr_1\times\bbr^{N-1}$ is the $N$-dimensional
Lorentzian space with the standard Lorentzian inner product
$\lagl\cdot,\cdot\ragl_1$ given by
$$
\lagl y,y'\ragl_1=-y_0y'_0+y_1\cdot y'_1,\quad
y=(y_0,y_1),\,y'=(y'_0,y'_1)\in\bbr^N_1
$$
in which the dot ``$\cdot$'' denotes the standard Euclidean inner product
on $\bbr^{N-1}$. From now on, we simply write $\bbh^n$ for $\bbh^n(-1)$.

Denote by $\bbs^n_+$ the hemisphere in $\bbs^n$ whose first coordinate is
positive. Then there are two conformal diffeomorphisms
$$\sigma:\bbr^n\to \bbs^n\bsl\{(-1,0)\}\ \mb{ and }\ \tau:\bbh^n\to \bbs^n_+$$
defined as follows:

\begin{align}
\sigma(u)&=\left(\fr{1-|u|^2}{1+|u|^2},\fr{2u}{1+|u|^2}\right),\quad
u\in \bbr^n, \label{1.1}\\
\tau(y)&=\left(\fr1{y_0},\fr{y_1}{y_0}\right), \quad y=(y_0,y_1)\in
\bbh^n\subset\bbr^{n+1}_1. \label{1.2}
\end{align}

Let $x:M^m\to \bbs^{m+p}$ be an immersed umbilic-free submanifold in
$\bbs^{m+p}$. Without loss of generality, we usually assume that $x$ is linearly full, that is, $x$ can not be contained in a hyperplane in $\bbr^{m+p+1}$. Then it is known that there are four fundamental \mo invariants of $x$, in terms of the light-cone model established by C. P. Wang in 1998 (\cite{w}) that are the
\mo metric $g$, the Blaschke tensor $A$, the
\mo second fundamental form $B$ and the \mo form $C$. Since the pioneer work of Wang, there have been obtained many interesting results in the \mo geometry of submanifolds including some important classification theorems of submanifolds with particular \mo invariants, such as, the classification of surfaces
with vanishing \mo forms (\cite{lw2}), that of \mo
isotropic submanifolds (\cite{lwz}), that of hypersurfaces with constant \mo sectional curvature (\cite{gllmw}), that of \mo isoparametric hypersurfaces (\cite{llwz}, \cite{hld}, \cite{ltqw}, etc), and that of hypersurfaces with Blaschke tensors linearly dependent on
the \mo metrics and \mo second  fundamental forms \cite{lw1}, which is later generalized by \cite{lz05} and \cite{clq}, respectively, in two different directions. Here we should remark that, after the classification of all {\em \mo parallel hypersurfaces} in $\bbs^{m+1}$, that is, hypersurfaces with parallel \mo second fundamental forms (\cite{hl1}), Zhai-Hu-Wang recently proved in \cite{zhw} an interesting theorem which classifies all $2$-codimensional {\em \mo parallel submanifolds} in the unit sphere.

Clearly, it is much natural to study submanifolds in the unit sphere $\bbs^n$ with particular Blaschke tensors. Note that a submanifold in $\bbs^n$ with vanishing Blaschke tensor also has a
vanishing \mo form, and therefore is a special \mo isotropic
submanifold; any \mo isotropic submanifold is necessarily of parallel Blaschke tensor. Furthermore, all \mo parallel submanifolds also have vanishing \mo forms and parallel Blaschke tensors(\cite{zhw}). Thus a rather natural and interesting problem is to seek a classification of all the submanifolds with parallel Blaschke tensors which we shall call for simplicity {\em Blaschke parallel submanifolds}.

To this direction, the first step is indeed the study of hypersurfaces. In fact, the following theorem has been established:

\begin{thm}[\cite{lz06}]\label{hypcase}
Let $x:M^m\to \bbs^{m+1}$, $m\geq 2$, be a Blaschke parallel
hypersurface. Then the \mo form of $x$ vanishes identically and $x$ is either \mo parallel, or \mo isotropic, or \mo equivalent to one of the following examples which have exactly two distinct Blaschke eigenvalues:

$(1)$ one of the minimal hypersurfaces as indicated in
Example $3.2$ of \cite{lz06};

$(2)$ one of the non-minimal hypersurfaces as indicated in
Example $3.3$  of \cite{lz06}.
\end{thm}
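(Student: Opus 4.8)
The plan is to work entirely with the standard Möbius structure equations for a hypersurface $x:M^m\to\bbs^{m+1}$, using a local orthonormal frame $\{e_i\}$ for the Möbius metric $g$ in which the Blaschke tensor $A=(A_{ij})$, the Möbius second fundamental form $B=(B_{ij})$ and the Möbius form $C=(C_i)$ are all expressed. Recall that $B$ is trace-free with $\sum_{i,j}B_{ij}^2=\frac{m-1}{m}$, and, since $x$ is umbilic-free, $B\neq0$ everywhere. The first step is to show $C\equiv0$. Using the Codazzi-type identity $A_{ij,k}-A_{ik,j}=B_{ik}C_j-B_{ij}C_k$ together with the hypothesis $\nabla A=0$ (so $A_{ij,k}=0$), I get $B_{ik}C_j=B_{ij}C_k$ for all $i,j,k$. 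If $C\neq0$ at some point, fixing $k_0$ with $C_{k_0}\neq0$ yields $B_{ij}=C_{k_0}^{-1}B_{ik_0}C_j$; the symmetry of $B$ then forces $B_{ik_0}$ to be proportional to $C_i$, so $B_{ij}=\mu C_iC_j$ has rank one, and $\tr B=0$ gives $\mu|C|^2=0$, hence $B=0$ there, contradicting $|B|^2=\frac{m-1}{m}\neq0$. Thus $C\equiv0$ and the Möbius form vanishes identically, as claimed.

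Once $C\equiv0$, the remaining structure equations collapse nicely. The integrability equation $C_{i,j}-C_{j,i}=\sum_k(B_{ik}A_{kj}-A_{ik}B_{kj})$ becomes $[A,B]=0$, so $A$ and $B$ are simultaneously diagonalizable, while the Codazzi equation $B_{ij,k}-B_{ik,j}=\delta_{ij}C_k-\delta_{ik}C_j$ reduces to the total symmetry of $\nabla B$. Next I use that $A$ is parallel: on the connected manifold $M$ its eigenvalues $a_1<\cdots<a_s$ are constant and the corresponding eigendistributions $D_1,\dots,D_s$ are smooth, mutually orthogonal and parallel, so the connection forms $\omega_{ij}$ vanish whenever $e_i,e_j$ lie in different blocks and, by de Rham, $(M,g)$ splits locally as a Riemannian product. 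Because $B$ commutes with $A$, it is block-diagonal for this splitting, and I may refine the frame so that $B=\mathrm{diag}(b_1,\dots,b_m)$ as well.

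The core of the argument is then to read off the consequences of the total symmetry of $\nabla B$ in this adapted frame. For indices $i,j$ in different $A$-blocks one has $\omega_{ij}=0$, and the Codazzi relations force each Möbius principal curvature $b_i$ to be constant along the directions of the other blocks; the Gauss equation, suitably contracted, then pins down $\tr A$ and the block values in terms of the $b_i$ and the constants $a_\alpha$. I would distinguish cases by the number $s$ of distinct Blaschke eigenvalues. When $s=1$ we have $A=a_1\,g$, i.e. $x$ is Möbius isotropic; when $B$ is itself parallel, $\nabla B=0$ and $x$ is Möbius parallel. The remaining possibility is $\nabla B\neq0$: here the constancy just extracted confines all variation of $B$ to a single one-dimensional direction, which, together with $[A,B]=0$ and the Gauss equation, forces $s=2$ with exactly one block carrying the sole non-constant Möbius principal curvature. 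The resulting first-order ODE for that curvature along the product factor integrates explicitly, and matching the integrated data against the warped-product constructions built from the conformal maps $\sigma$ and $\tau$ of \eqref{1.1}--\eqref{1.2} identifies $x$, up to Möbius transformation, with the minimal family of Example $3.2$ or the non-minimal family of Example $3.3$ of \cite{lz06}.

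I expect the main obstacle to be this last step: turning the algebraic normal form of the pair $(A,B)$ into an honest classification. Establishing $C\equiv0$ and $[A,B]=0$ is essentially algebraic, but separating the genuinely non-Möbius-parallel solutions, solving the warping ODE, and recognizing the solution as one of the explicit examples -- while simultaneously ruling out $s\geq3$ in the non-parallel case -- requires careful bookkeeping of the Gauss and Codazzi equations across the parallel eigendistributions and a precise reconstruction of the immersion from its Möbius invariants. This reconstruction, rather than the vanishing of $C$, is where the real work lies.
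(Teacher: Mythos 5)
This paper does not actually prove Theorem \ref{hypcase}: it is quoted as a known result from \cite{lz06}, and what the paper proves is the higher-codimension, three-eigenvalue analogue (Theorem \ref{main}) in Section 4, whose structure is the template for the hypersurface argument. Measured against that template, your first two steps are correct and essentially complete: the rank-one argument deriving $C\equiv 0$ from \eqref{2.17}, $\tr B=0$ and $|B|^2=\fr{m-1}m>0$ is exactly right, and so are the commutation $[A,B]=0$ from \eqref{2.16}, the simultaneous diagonalization, and the de Rham splitting coming from parallel $A$.

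The genuine gap is your third step, which is where the theorem actually lives, and you partly concede this yourself. Two things are missing. First, the claim that $\nabla B\neq 0$ ``confines all variation of $B$ to a single one-dimensional direction'' and thereby forces $s=2$ is an assertion, not an argument. Note that $\nabla B\neq0$ splits into two genuinely different situations: non-constant Möbius principal curvatures (where the mixed Gauss equation $b_ib_j=-(\lambda_a+\lambda_b)$ across blocks, combined with the Codazzi constancy you extracted, can indeed be made to force $s=2$ --- but you never run this computation), and constant Möbius principal curvatures with rotating eigendirections, where one must still show the hypersurface is Möbius parallel; your sketch does not address the latter case at all, and in higher codimension the corresponding claim is simply false (the examples ${\rm LS}(\fkm,\fkp,\fkr,{\mu})$ of this paper are Blaschke parallel, non-Möbius-parallel, with $s=3$), so codimension one must enter essentially. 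Second, and more fundamentally, the passage from the normal form of $(g,A,B)$ to the identification of $x$ with the examples of \cite{lz06} is not an ``integrate an ODE and match'' exercise. The device that does this work, both in \cite{lz06} and in Section 4 here, is to build vector-valued functions from the Möbius position and biposition vectors and the distinguished normal direction --- e.g. ${\mathbf c}=N+\lambda Y+{\mu}_0E_{\alpha_0}$ as in \eqref{4-15}, or $z_a=N+\lambda_aY-B^0_aE_{\alpha_0}$ as in \eqref{4.33} --- to show via the structure equations \eqref{2-6}--\eqref{2-8} that an appropriate such combination is constant, and then to split into cases according to whether this constant vector is time-like, light-like or space-like, each case reducing $x$ through $\sigma$, $\tau$ and the congruence theorem (Theorem \ref{wth}) to a concrete submanifold of a space form or to the explicit examples. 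Without this constant-vector construction, or an equivalent mechanism, ``matching the integrated data against the warped-product constructions'' is a restatement of the theorem rather than a proof of it.
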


As the second step, we have proved earlier the following classification:

\begin{thm}[\cite{ls3}]\label{ls3} Let $x:M^m\to \bbs^{m+p}$ be a Blaschke parallel submanifold immersed in $\bbs^{m+p}$ with vanishing \mo form $C$. If $x$ has two distinct Blaschke eigenvalues, then it
must be \mo equivalent to one of the following four kinds of immersions:

$(1)$ a non-minimal and umbilic-free pseudo-parallel immersion $\td x:M\to \bbs^{m+p}$ with parallel mean curvature and constant scalar curvature, which has two distinct principal curvatures in the direction of the mean curvature vector;

$(2)$ the image under $\sigma$ of a non-minimal and umbilic-free pseudo-parallel immersion $\bar x:M\to \bbr^{m+p}$ with parallel mean curvature and constant scalar curvature, which has two distinct principal curvatures in the direction of the mean curvature vector;

$(3)$ the image under $\tau$ of a non-minimal and umbilic-free pseudo-parallel immersion $\bar x:M\to \bbh^{m+p}$ with parallel mean curvature and constant scalar curvature, which has two distinct principal curvatures in the direction of the mean curvature vector;

$(4)$ a submanifold ${\rm LS}(m_1,p_1,r,{\mu})$ for some parameters $m_1,p_1,r,{\mu}$.
\end{thm}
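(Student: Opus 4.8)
The plan is to run C. P. Wang's \mo moving--frame machinery \cite{w} and then exploit the two hypotheses---$\nabla A=0$ and $C=0$---to force a local product structure that, under the dictionary between \mo invariants and space--form invariants, collapses onto the four listed models. First I would record the integrability conditions of the \mo invariants in a local orthonormal frame $\{e_i\}$ with coframe $\{\omega_i\}$ and normal index $\alpha$. In Wang's normalization these read
\[
A_{ij,k}-A_{ik,j}=B^\alpha_{ik}C^\alpha_j-B^\alpha_{ij}C^\alpha_k,\qquad
C^\alpha_{i,j}-C^\alpha_{j,i}=\sum_k\big(B^\alpha_{ik}A_{kj}-B^\alpha_{jk}A_{ki}\big),
\]
together with $B^\alpha_{ij,k}-B^\alpha_{ik,j}=\delta_{ij}C^\alpha_k-\delta_{ik}C^\alpha_j$ and the \mo Gauss equation. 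Imposing $C=0$ collapses these: the third relation makes each $B^\alpha$ a Codazzi tensor, and the second becomes $[A,B^\alpha]=0$ for every $\alpha$, so each \mo shape operator commutes with $A$ and preserves its eigenspaces. Since $A$ is parallel with exactly two distinct (hence constant) eigenvalues $\lambda_1\neq\lambda_2$, the identity $\nabla_Z(AX)=A(\nabla_ZX)$ shows the eigendistributions $D_1,D_2$ (of constant ranks $m_1,m_2=m-m_1$) are $g$--parallel; thus every $B^\alpha$ is block--diagonal and, by de Rham, the \mo metric $g$ is locally a Riemannian product $g_1\oplus g_2$.

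Next I would pass from the \mo picture to a space--form picture. The vanishing of $C$ is precisely the obstruction to normalizing $x$, up to a \mo transformation, so that in a space form its mean curvature vector is parallel and its scalar curvature constant; constancy of the scalar curvature is guaranteed here because the constant eigenvalues of $A$, fed into the \mo Gauss equation together with the block--diagonal $B$, make the \mo scalar curvature constant. Which of the three space forms occurs is dictated by the sign of a single \mo constant built from $\lambda_1,\lambda_2$ and the multiplicities $m_1,m_2$: after possibly composing with the conformal diffeomorphisms $\sigma$ of \eqref{1.1} or $\tau$ of \eqref{1.2}, the representative lands in $\bbs^{m+p}$, in $\bbr^{m+p}$, or in $\bbh^{m+p}$. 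In each case the parallel, block--diagonal structure of $B$ (controlled in the normal directions through the Ricci equation) translates into the immersion being pseudo--parallel with exactly two principal curvatures in the direction of the mean curvature vector, which yields cases $(1)$--$(3)$.

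Finally I would isolate the remaining possibility. When the normalization above degenerates---that is, when the factor of the product corresponding to one eigenvalue cannot be absorbed into a single space--form immersion---the integration of the structure equations produces instead a light--cone/product example rather than a submanifold of one $Q^{m+p}(c)$. This is precisely the family ${\rm LS}(m_1,p_1,r,{\mu})$, and matching its four free data to the pair of multiplicities, the product radius, and the eigenvalue gap recovers the parameters, giving case $(4)$.

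The main obstacle I expect is twofold. The genuinely delicate step is the reduction to a space form with \emph{parallel} mean curvature: one must convert the \mo covariant derivatives into the space--form connection through the conformal factor $\rho$ and show that $C=0$ together with $\nabla A=0$ forces the full normalized second fundamental form---not merely its trace--free part $B$---to be parallel, while controlling all cross terms between $D_1$ and $D_2$. The second difficulty is bookkeeping the case split cleanly: proving that exactly one of the four normalizations applies, and that the discrete invariant distinguishing them (which ambient space form, versus the ${\rm LS}$ degeneration) is globally constant, so that the local product integrates to a single global model up to \mo equivalence.
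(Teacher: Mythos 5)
Your opening moves are sound and coincide with how the paper argues (Section 4 here for three eigenvalues, which explicitly follows the same scheme as \cite{ls3} for two): with $C=0$, the Ricci identity \eqref{2.16} gives $[A,B^\alpha]=0$, the parallel tensor $A$ has constant eigenvalues and parallel eigendistributions, $(M,g)$ splits as a Riemannian product, and every $B^\alpha$ is block-diagonal. The proposal collapses, however, at the step you lean on hardest: the assertion that ``the vanishing of $C$ is precisely the obstruction'' to normalizing $x$, up to a \mo transformation, into a space-form immersion with parallel mean curvature and constant scalar curvature. That assertion is false, and the theorem you are proving is itself the counterexample: the submanifolds ${\rm LS}(m_1,p_1,r,\mu)$ of case $(4)$ satisfy $C\equiv 0$ and $\nabla A=0$, yet they are \mo equivalent to none of the immersions in cases $(1)$--$(3)$. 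So $C=0$ cannot by itself deliver the space-form reduction, and your proposal contains no invariant criterion deciding when $x$ falls into cases $(1)$--$(3)$ and when into case $(4)$; calling the latter ``the case where the normalization degenerates'' restates the problem rather than solving it. For two Blaschke eigenvalues the actual dividing line is the sign of $\lambda_1+\lambda_2$: the two-block analogue of \eqref{4-3} gives $\lagl B_{i_1i_1},B_{i_2i_2}\ragl_1=-(\lambda_1+\lambda_2)$, while $\tr B=0$ (the analogue of \eqref{4.17}) forces $\lambda_1+\lambda_2\geq 0$, with equality exactly in the ${\rm LS}$ case.

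What the paper's method supplies, and your outline lacks, is the device that makes both halves of this dichotomy effective. First, block-diagonality of $B$ must be upgraded to statements about all its covariant derivatives (Lemmas \ref{lem4.1} and \ref{lem4.2}), so that the \mo normal bundle splits into subbundles $V_0,V_{10},V_{20}$ that are parallel with respect to the normal connection (Lemma \ref{lem4.4}); this yields a parallel unit normal $E_{\alpha_0}$ spanning $V_0$ and constants $B^0_a$ with $B^{V_0}_{i_ai_a}=B^0_aE_{\alpha_0}$. Second, one builds the constant vector ${\mathbf c}=N+\lambda Y+\mu_0 E_{\alpha_0}$ as in \eqref{4-15}: requiring $d{\mathbf c}=0$ is a linear system in $(\lambda,\mu_0)$ solvable precisely when $V_0\neq 0$, i.e.\ $\lambda_1+\lambda_2>0$, and then $\lagl{\mathbf c},{\mathbf c}\ragl_1=2\lambda+\mu_0^2$ is exactly the ``single constant'' whose sign you correctly guessed should select $\bbs^{m+p}$, $\bbr^{m+p}$ (via $\sigma$), or $\bbh^{m+p}$ (via $\tau$) --- but without constructing ${\mathbf c}$ there is nothing whose sign one can take, and no proof that a parallel-mean-curvature representative exists at all. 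When instead $\lambda_1+\lambda_2=0$, no such ${\mathbf c}$ exists; the vectors $z_a=N+\lambda_aY$ are then constant along the opposite factor, they realize $M_1$ and $M_2$ as minimal submanifolds of $\bbh^{m_1+p_1}(-1/r^2)$ and $\bbs^{m_2+p_2}(r)$, and the congruence theorem (Theorem \ref{wth}) identifies $x$ with ${\rm LS}(m_1,p_1,r,\mu)$. Finally, the ``delicate step'' you anticipate --- forcing the full second fundamental form to be parallel --- is not required and would prove far too much (that is the \mo parallel class of \cite{zhw}): pseudo-parallelism asks only that $\lagl h,H\ragl$ be parallel, and once the normalized immersion has constant \mo factor $\rho$, formula \eqref{2.12} shows $A$ and $\rho^{-2}\lagl h,H\ragl$ differ by a constant multiple of the metric, so this is automatic from $\nabla A=0$.
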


{\rmk\rm Submanifolds ${\rm LS}(m_1,p_1,r,{\mu})$ with multiple parameters $m_1,p_1,r,{\mu}$ were first defined in Example 3.2 of \cite{ls3}. As in \cite{ls3}, we call a Riemannian submanifold {\em pseudo-parallel} if the inner product of its second fundamental form with the mean curvature vector is parallel. In particular, if the second fundamental form is itself parallel, then we simply call this submanifold {\em (Euclidean) parallel}.}

In this paper, we continue our work on the classification of the Blaschke parallel submanifolds in $\bbs^n$ with vanishing \mo forms. Naturally, due to Theorems \ref{hypcase} and \ref{ls3}, the next step is to study those Blaschke parallel submanifolds with three distinct Blaschke eigenvalues. To do this, we first construct in Section \ref{sec3} a new class of Blaschke parallel submanifolds denoted by ${\rm LS}(\fkm,\fkp,\fkr,{\mu})$ with, as desired,
vanishing \mo forms and exactly three distinct Blaschke eigenvalues. The idea of this construction originates from those hypersurface examples that were first introduced in \cite{lz06} (see also \cite{lz07}) and are the only non-\mo isoparametric but Blaschke isoparametric hypersurfaces (cf. \cite{lz09}) with two distinct Blaschke eigenvalues. Note that, due to \cite{ltw}, any Blaschke isoparametric hypersurfaces with more than two distinct Blaschke eigenvalues must be \mo isoparametric, which is an affirmative solution of the problem originally raised in \cite{lz09} (see also \cite{lp1} and \cite{lp2}). It should also be remarked that, by \cite{ltqw} and \cite{rt}, the \mo isoparametric hypersurfaces (cf. \cite{llwz}) have been completely classified and thus the work in \cite{ltw} actually finishes the classification of the Blaschke isoparametric hypersurfaces (see also the latest partial classification theorem in \cite{hlz}). Besides, there have been some parallel results on space-like hypersurfaces in the de Sitter space $\bbs^n_1$ (see \cite{ls1} and the references therein). Combining all we know on this subject, it turns out that our new examples and the argument in this present paper will shed a new light on the completement of our final classification work which will be done in a forth-coming paper.

The main theorem of this paper is now stated as follows:

\begin{thm}\label{main} Let $x:M^m\to \bbs^{m+p}$ be a Blaschke parallel submanifold immersed in $\bbs^{m+p}$ with vanishing \mo form $C$. If $x$ has three distinct Blaschke eigenvalues, then it
must be \mo equivalent to one of the following four kinds of immersions:

$(1)$ a non-minimal and umbilic-free pseudo-parallel immersion $\td x:M^m\to \bbs^{m+p}$ with parallel mean curvature and constant scalar curvature, which has three distinct principal curvatures in the direction of the mean curvature vector;

$(2)$ the image under $\sigma$ of a non-minimal and umbilic-free pseudo-parallel immersion $\bar x:M^m\to \bbr^{m+p}$ with parallel mean curvature and constant scalar curvature, which has three distinct principal curvatures in the direction of the mean curvature vector;

$(3)$ the image under $\tau$ of a non-minimal and umbilic-free pseudo-parallel immersion $\bar x:M^m\to \bbh^{m+p}$ with parallel mean curvature and constant scalar curvature, which has three distinct principal curvatures in the direction of the mean curvature vector;

$(4)$ a submanifold ${\rm LS}(\fkm,\fkp,\fkr,{\mu})$ given in Example \ref{expl3.2} for some multiple parameters $\fkm,\fkp,\fkr,{\mu}$ satisfying $m_3r^2_2\neq m_2r^2_3$.
\end{thm}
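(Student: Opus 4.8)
The plan is to adapt the strategy of the two-eigenvalue case (Theorem \ref{ls3}) to the present three-eigenvalue situation, the essential new feature being the finer splitting of the tangent bundle into \emph{three} parallel blocks. Throughout I would work in Wang's light-cone model, writing $\{Y,N,Y_1,\dots,Y_m,E_1,\dots,E_p\}$ for the \mo moving frame in $\bbr^{m+p+2}_1$, and I would use the \mo structure equations (Gauss, Codazzi, Ricci) together with the two standing hypotheses $C\equiv 0$ and $\nabla A=0$.

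First I would exploit the integrability conditions. The Ricci-type identity $C^\alpha_{i,j}-C^\alpha_{j,i}=\sum_k(A_{ik}B^\alpha_{kj}-A_{jk}B^\alpha_{ki})$ collapses, because $C\equiv 0$, to $[A,B^\alpha]=0$ for every $\alpha$, so that $A$ and all the $B^\alpha$ are simultaneously diagonalizable. Since $A$ is parallel with exactly three distinct, hence constant, eigenvalues $\lambda_1,\lambda_2,\lambda_3$, the associated eigendistributions $V_1,V_2,V_3$ of dimensions $m_1,m_2,m_3$ are smooth, mutually orthogonal and parallel; by the local de Rham decomposition theorem, $(M^m,g)$ splits isometrically as a Riemannian product $M_1\times M_2\times M_3$, and each $B^\alpha$ respects this block decomposition. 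The Codazzi equation for $B$, again with $C\equiv 0$, shows in addition that $\nabla B$ is totally symmetric and compatible with the splitting.

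Next I would integrate the structure equations for the canonical lifts. For each $a$ the combination $N-\lambda_a Y$ has differential $d(N-\lambda_a Y)=\sum_{i\notin V_a}(\lambda_{b(i)}-\lambda_a)\omega_i Y_i$ (for the appropriate sign convention), which annihilates every direction tangent to $M_a$; tracking these three vectors, each constant along one factor, together with the block form of the $B^\alpha$ and the \mo Gauss equation $R_{ijkl}=\sum_\alpha(B^\alpha_{ik}B^\alpha_{jl}-B^\alpha_{il}B^\alpha_{jk})+A_{ik}\delta_{jl}+A_{jl}\delta_{ik}-A_{il}\delta_{jk}-A_{jk}\delta_{il}$, pins down the position of $M$ inside the light cone and forces each factor $M_a$ to have constant curvature with a second fundamental form that is umbilic within its block. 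Unscaling the \mo metric by $\rho^{-2}$ then shows that the underlying immersion has parallel mean curvature vector and constant scalar curvature, is pseudo-parallel in the sense defined above, and has precisely three distinct principal curvatures in the mean-curvature direction, matching the three Blaschke eigenvalues.

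Finally I would separate the four cases by a \mo invariant: the causal type of the span of the vectors $N-\lambda_a Y$ determines whether the canonical lift corresponds to an immersion into $\bbs^{m+p}$, into $\bbr^{m+p}$ (recovered through $\sigma$), or into $\bbh^{m+p}$ (recovered through $\tau$), giving cases $(1)$, $(2)$ and $(3)$; the one remaining, genuinely reducible configuration is identified with the product-type example ${\rm LS}(\fkm,\fkp,\fkr,\mu)$ of Example \ref{expl3.2}, in which the non-degeneracy condition $m_3r^2_2\neq m_2r^2_3$ is exactly what makes the second and third spherical factors contribute distinct Blaschke eigenvalues, so that there are three rather than two. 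The hard part is precisely this last reconstruction and case separation: one must integrate the light-cone structure equations on the product $M_1\times M_2\times M_3$ and verify, with care, that no configuration other than the LS family escapes the three space-form cases, and that the parameter constraint $m_3r^2_2\neq m_2r^2_3$ corresponds exactly to three distinct eigenvalues.
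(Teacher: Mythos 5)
Your opening step is sound and matches the paper: with $C\equiv 0$, the Ricci identity \eqref{2.16} gives $[A,B^\alpha]=0$, so each $B^\alpha$ is block-diagonal with respect to the parallel eigendistributions of $A$, and $(M,g)$ splits as $M_1\times M_2\times M_3$. But from there your proposal is missing the idea that actually drives the classification: a decomposition of the \mo normal bundle. The paper sets $V_a=\spn\{\sum_\alpha B^\alpha_{i_aj_a\cdots k_a}E_\alpha\}$, $V_{a0}=V_a\cap(V_{a'}+V_{a''})^\bot$, and $V_0=V'_{10}+V'_{20}+V'_{30}$, proves these subbundles are parallel in the \mo normal bundle (Lemma \ref{lem4.4}) and, crucially, that $1\leq\dim V_0\leq 2$ (Lemma \ref{lem4.3}). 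The entire case separation is governed by this dichotomy: $\dim V_0=2$ leads to cases $(1)$--$(3)$ (Proposition \ref{prop4.1}), while $\dim V_0=1$ leads either to case $(2)$ again, in the degenerate subcase where some $2\lambda_a+(B^0_a)^2=0$ (Proposition \ref{prop4.2}), or to the ${\rm LS}$ example $(4)$ (Proposition \ref{prop4.3}). Nothing in your argument plays this role, so there is no working mechanism for deciding which of the four conclusions holds.

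Your proposed substitute --- the causal type of the span of the vectors $N-\lambda_aY$ --- fails concretely. Since the $\lambda_a$ are distinct, $\spn\{N-\lambda_1Y,N-\lambda_2Y,N-\lambda_3Y\}=\spn\{Y,N\}$, which by \eqref{2.5} is a Lorentzian $2$-plane at every point of every submanifold; this invariant therefore distinguishes nothing. Individually, $\lagl N-\lambda_aY,N-\lambda_aY\ragl_1=-2\lambda_a$ depends only on $\lambda_a$, whereas the correct invariants must involve the umbilic normal components of $B$: for $\dim V_0=2$ the single constant vector ${\mathbf c}=N+\lambda Y+\mu_0E_{\alpha_0}+\mu'_0E_{\alpha'_0}$, with coefficients solving the linear system \eqref{4.25}, whose norm $2\lambda+\mu_0^2+(\mu'_0)^2$ decides among $(1)$, $(2)$, $(3)$; for $\dim V_0=1$ the vectors $z_a=N+\lambda_aY-B^0_aE_{\alpha_0}$ with $\lagl z_a,z_a\ragl_1=2\lambda_a+(B^0_a)^2$, where vanishing of one of these gives case $(2)$ and non-vanishing of all three gives case $(4)$. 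Note also that your vectors are each constant only along one factor $M_a$, not on $M^m$, so they cannot serve as the fixed ``center'' needed to place the image in a space form. Finally, your intermediate claim that each factor is forced ``to have constant curvature with a second fundamental form that is umbilic within its block'' is false: only the $V_0$-component of $B$ is umbilic on each block, and the factors are realized, via the Gauss--Codazzi--Ricci equations of Lemma \ref{lem4-5}, as minimal immersions with constant \emph{scalar} curvature into $\bbh^{m_1+p_1}(-1/r^2_1)$ or $\bbs^{m_a+p_a}(r_a)$ --- they need not have constant sectional curvature, and this flexibility is exactly why the answer is the family ${\rm LS}(\fkm,\fkp,\fkr,\mu)$ rather than a rigid list.
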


\begin{rmk}\label{rmk1}\rm In deed, it is directly verified that each of the immersed
submanifolds stated in Theorem \ref{main} is Blaschke parallel with vanishing \mo form and exactly three distinct Blaschke eigenvalues (see Section \ref{sec3}).

We also remark that the final classification theorem will be much like Theorem \ref{main} with the corresponding examples ${\rm LS}(\fkm,\fkp,\fkr,{\mu})$ being extended to the general case.
\end{rmk}

{\bf Acknowledgement}
The first author is supported by National Natural Science Foundation of China (No. 11171091, 11371018).

\section{Preliminaries}\label{sec2}

Let $x:M^m\to \bbs^{m+p}$ be an immersed umbilic-free submanifold. Denote by $h$ the second fundamental form of $x$
and $H=\fr1m\tr h$ the mean curvature vector field. Define
\be\label{2.1}
\rho=\left(\fr m{m-1}\left(|h|^2-m|H|^2\right)\right)^{\fr12},\quad
Y=\rho(1,x).
\ee
Then $Y:M^m\to \bbr^{m+p+2}_1$ is an immersion
of $M^m$ into the Lorentzian space $\bbr^{m+p+2}_1$ and is called the
canonical lift (or {\em the \mo position vector}) of $x$. The function
$\rho$ given by \eqref{2.1} may be called {\em the \mo factor} of the
immersion $x$. Denote
$$
C^{m+p+1}_+=\left\{y=(y_0,y_1)\in\bbr_1\times\bbr^{m+p+1}\,;\ \lagl
y,y\ragl_1=0,\ y_0>0\right\}
$$
and let $O(m+p+1,1)$ be the Lorentzian
group of all elements in $GL(m+p+2;\bbr)$ preserving the standard
Lorentzian inner product $\lagl\cdot,\cdot\ragl_1$ on
$\bbr^{m+p+2}_1$. There is a subgroup $O^+(m+p+1,1)$ of $O(m+p+1,1)$ that is given
by
\be\label{2.2}
O^+(m+p+1,1)=\left\{T\in O(m+p+1,1)\,;\ T(C^{m+p+1}_+)\subset
C^{m+p+1}_+\right\}.\ee

The following theorem is well known.

\begin{thm}\label{wth1} $($\cite{w}$)$ Two submanifolds $x,\td x:M^m\to \bbs^{m+p}$ with
\mo position vectors $Y,\td Y,$ respectively, are \mo equivalent if
and only if there is a $T\in O^+(m+p+1,1)$ such that $\td Y=T(Y)$.
\end{thm}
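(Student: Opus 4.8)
The plan is to prove both implications through the light-cone model that the preliminaries have set up, using the single conceptual fact that the \mo metric $g$ is a conformal (\mo) invariant of the immersion. First I would record the elementary computation that, for any positive smooth function $\lambda$ on $M^m$, the lift $\lambda(1,x):M^m\to\bbr^{m+p+2}_1$ has induced metric $\langle d(\lambda(1,x)),d(\lambda(1,x))\rangle_1=\lambda^2\,\mathrm{I}$, where $\mathrm{I}$ is the metric induced by $x$; this uses only $\langle(1,x),(1,x)\rangle_1=0$ and $\langle(1,x),(0,dx)\rangle_1=\frac12 d|x|^2=0$ on $\bbs^{m+p}$. Consequently the \mo position vector $Y=\rho(1,x)$ is characterized as the \emph{unique} lift of $x$ into the forward light cone $C^{m+p+1}_+$ whose induced metric equals the \mo metric $g=\rho^2\,\mathrm{I}$.

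Second, I would make precise the correspondence between the \mo group of $\bbs^{m+p}$ and $O^+(m+p+1,1)$. Identifying $\bbs^{m+p}$ with the set of forward null rays in $\bbr^{m+p+2}_1$ via $u\mapsto\bbr_{>0}\cdot(1,u)$, every $T\in O^+(m+p+1,1)$ permutes forward null rays and hence descends to a diffeomorphism $\phi_T$ of $\bbs^{m+p}$; one checks that $\phi_T$ is conformal and that $T\mapsto\phi_T$ is an isomorphism onto the \mo group. This is the step where the Lorentzian isometry group is transcribed into conformal transformations of the sphere, and it is what makes the two sides of the stated equivalence speak about the same objects.

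With these two ingredients the proof is short. For the implication $(\Leftarrow)$, if $\td Y=T(Y)$ with $T\in O^+(m+p+1,1)$, then $T$ carries $C^{m+p+1}_+$ into itself, so $\td Y$ is a lift into the forward light cone; projecting null rays shows $\td x=\phi_T\circ x$, so $x$ and $\td x$ are \mo equivalent. For $(\Rightarrow)$, if $\td x=\phi\circ x$ for a \mo transformation $\phi=\phi_T$, then $T(Y)$ lies in $C^{m+p+1}_+$, projects to $\td x$, and, because $T$ is a linear isometry of $\langle\cdot,\cdot\rangle_1$, induces the metric $\langle dT(Y),dT(Y)\rangle_1=\langle dY,dY\rangle_1=g$; since $g$ is a \mo invariant this common value equals the \mo metric $\td g$ of $\td x$, so by the uniqueness characterization of the first paragraph $T(Y)$ must be the \mo position vector $\td Y$, giving $\td Y=T(Y)$.

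The main obstacle is the conformal invariance of the \mo metric $g=\rho^2\,\mathrm{I}$ that underlies the uniqueness characterization of $Y$: one must verify that the quantity $\frac{m}{m-1}\big(|h|^2-m|H|^2\big)\,\mathrm{I}$ is unchanged when $x$ is composed with a \mo transformation, i.e. that the conformal rescaling of $\mathrm{I}$ is exactly compensated by the transformation of $|h|^2-m|H|^2$ under a conformal change of the ambient metric. This is a direct but careful computation with the conformal transformation laws of the second fundamental form and the mean curvature; once it is in hand, both implications follow formally from the light-cone dictionary above, and Theorem \ref{wth1} is established.
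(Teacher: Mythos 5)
The paper offers no proof of Theorem \ref{wth1}: it is quoted as well known from \cite{w}, and in fact the paper's exposition runs in the opposite logical direction to yours --- it takes the theorem as given and only then deduces, in the paragraph that follows it, that $g=\rho^2dx\cdot dx$ is a \mo invariant metric. Your proposal inverts this order: you want first to establish the invariance of $\rho^2dx\cdot dx$ under conformal transformations of $\bbs^{m+p}$, and then to obtain the theorem formally from the light-cone dictionary (uniqueness of the positive lift inducing a prescribed metric, plus the identification of $O^+(m+p+1,1)$ with the conformal group of $\bbs^{m+p}$ through its action on forward null rays). That order is legitimate --- it is essentially how the result is proved in \cite{w} --- and your $(\Leftarrow)$ direction, as well as your uniqueness characterization of $Y$ among the lifts $\lambda(1,x)$, are correct and complete, granting the routine check that each $T$ descends to a conformal diffeomorphism $\phi_T$.

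The gap is that, in your chosen logical order, the invariance of the \mo metric cannot be quoted anywhere: it is the entire analytic content of the theorem, and you explicitly defer it (``a direct but careful computation'') without carrying it out. Concretely, in the $(\Rightarrow)$ direction all you know is that $T(Y)$ is a positive lift of $\td x$ whose induced metric is $\lagl dT(Y),dT(Y)\ragl_1=g$; to invoke your uniqueness characterization you must identify this $g$ with the \mo metric $\td g=\td\rho^{\,2}\,d\td x\cdot d\td x$ of $\td x$, i.e. you must prove that for $\td x=\phi\circ x$ with $\phi^*(du\cdot du)=e^{2\tau}du\cdot du$ and $\sigma=\tau\circ x$ one has
\[
|\td h|^2-m|\td H|^2=e^{-2\sigma}\left(|h|^2-m|H|^2\right),\quad\mb{ hence }\quad \td\rho^{\,2}\,d\td x\cdot d\td x=\rho^2\,dx\cdot dx .
\]
This rests on the conformal covariance of the trace-free second fundamental form $h-H\,(dx\cdot dx)$ under a conformal change of the ambient metric, with a weight that exactly cancels the factor $e^{2\sigma}$ in $d\td x\cdot d\td x=e^{2\sigma}dx\cdot dx$; until that computation is written out, the $(\Rightarrow)$ implication is unproven, since nothing else forces $T(Y)$ to coincide with $\td Y$ rather than with some other positive lift of $\td x$. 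A second, smaller input that you assert with ``one checks'' is the surjectivity of $T\mapsto\phi_T$ onto the conformal group of $\bbs^{m+p}$ (a Liouville-type statement, needed so that an arbitrary \mo equivalence is of the form $\phi_T$); this is classical and may be cited, but it too is an external ingredient rather than something your argument supplies.
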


By Theorem \ref{wth1}, the induced metric
$g=Y^*\lagl\cdot,\cdot\ragl_1=\rho^2dx\cdot dx$ by $Y$ on
$M^m$ from the Lorentzian product $\lagl\cdot,\cdot\ragl_1$ is a \mo
invariant Riemannian metric (cf. \cite{b}, \cite{c}, \cite{w}), and
is called the \mo metric of $x$. Using the vector-valued function
$Y$ and the Laplacian $\Delta$ of the metric $g$, one can define
another important vector-valued function $N:M^m\to\bbr^{m+p+2}_1$, called {\em the \mo biposition vector}, by
\be\label{2.3}
N=-\fr1m\Delta Y-\fr1{2m^2}\lagl\Delta Y,\Delta Y\ragl_1Y.
\ee
Then it is verified that the \mo position vector $Y$ and the \mo
biposition vector $N$ satisfy the following identities \cite{w}:
\begin{align}
&\lagl \Delta Y,Y\ragl_1=-m,\quad \lagl \Delta Y,dY\ragl_1=0,\quad
\lagl\Delta Y,\Delta Y\ragl_1=1+m^2\kappa, \label{2.4}\\
&\lagl Y,Y\ragl_1=\lagl N,N\ragl_1=0,\quad \lagl Y,N\ragl_1=1,
\label{2.5}
\end{align}
where $\kappa$ denotes the normalized scalar curvature of the \mo
metric $g$.

Let $V\to M^m$ be the vector subbundle of the trivial Lorentzian
bundle $M^m\times\bbr^{m+p+2}_1$ defined to be the orthogonal
complement of $\bbr Y\oplus \bbr N\oplus Y_*(TM^m)$ with respect to
the Lorentzian product $\lagl\cdot,\cdot\ragl_1$. Then $V$ is called
the \mo normal bundle of the immersion $x$. Clearly, we have the
following vector bundle decomposition:
\be\label{2.6}
M^m\times\bbr^{m+p+2}_1=\bbr Y\oplus \bbr N\oplus Y_*(TM^m)\oplus
V.
\ee

Denote by $T^\bot M^m$ the normal bundle of the immersion
$x:M^m\to \bbs^{m+1}$. Then the mean curvature vector field $H$ of $x$
defines a bundle isomorphism $\Phi:T^\bot M^m\to V$ by
\be\label{2.7}
\Phi(e)=\left(H\cdot e,(H\cdot e)x+e\right)\quad\text{for any } e\in
T^\bot M^m.
\ee
It is known that $\Phi$ preserves the inner products as well as
the connections on $T^\bot M^m$ and $V$ (\cite{w}).

To simplify notations, we make the following conventions on the
ranges of indices used frequently in this paper:
\be\label{2.8}
1\leq i,j,k,\cdots\leq m,\quad m+1\leq
\alpha,\beta,\gamma,\cdots\leq m+p.
\ee

For a local orthonormal  frame field $\{ e_i\}$ for the induced
metric $dx\cdot dx$ with the dual $\{\theta^i\}$ and for an orthonormal
normal frame field $\{ e_\alpha\}$ of $x$, we set
\be\label{2.9}
E_i=\rho^{-1}e_i,\quad \omega^i=\rho\theta^i,\quad
E_\alpha=\Phi(e_\alpha).
\ee
Then $\{E_i\}$ is a local orthonormal frame field on $M^m$ with respect to the \mo metric
$g$, $\{\omega^i\}$ is the dual of $\{E_i\}$, and $\{E_\alpha\}$ is
a local orthonormal frame field of the \mo normal bundle $V\to M$. Clearly,
$\{Y,N,Y_i:=Y_*(E_i),E_\alpha\}$ is a moving frame of $\bbr^{m+p+2}_1$ along $M^m$. If the basic \mo invariants $A$, $B$ and $C$ are respectively written as
\be\label{2.10}
A=\sum
A_{ij}\omega^i\omega^j,\quad  B=\sum B^\alpha_{ij}\omega^i\omega^j
E_\alpha,\quad C=\sum  C^\alpha_i\omega^i E_\alpha,
\ee
then we have the following equations of motion (\cite{w}):
\begin{align}
dY=&\sum Y_i\omega^i,\quad dN=\sum A_{ij}\omega^jY_i+C^\alpha_i\omega^iE_\alpha,\label{2-6}\\
dY_i=&-\sum A_{ij}\omega^jY-\omega^iN+\sum \omega^j_iY_j+\sum B^\alpha_{ij}\omega^jE_\alpha,\label{2-7}\\
dE_\alpha=&-\sum C^\alpha_i\omega^iY-\sum B^\alpha_{ij}\omega^jY_i+\sum \omega^\beta_\alpha E_\beta,\label{2-8}
\end{align}
where $\omega^j_i$ are the Levi-Civita connection forms of the \mo metric $g$ and $\omega^\beta_\alpha$ are the (\mo) normal connection forms of $x$. Furthermore,
by a direct computation one can find the following local expressions (\cite{w}):
\begin{align}
A_{ij}=&- \rho^{-2}\left(\hess_{ij}(\log \rho)- e_i(\log \rho)
e_j(\log \rho)
-\sum  H^\alpha h^\alpha_{ij}\right) \nnm\\
&-\fr12\rho^{-2}\left(|d\log \rho|^2-1+| H|^2\right)\delta_{ij},\label{2.12}\\
B^\alpha_{ij}=&\ \rho^{-1}\left( h^\alpha_{ij}-
H^\alpha\delta_{ij}\right), \label{2.13}\\
C^\alpha_i=&-\rho^{-2}\left(H^\alpha_{,i}+\sum(
h^\alpha_{ij}-H^\alpha\delta_{ij})
e_j(\log\rho)\right), \label{2.11}
\end{align}
in which the subscript ``$,i$'' denotes the covariant derivative
with respect to the induced metric $d x\cdot d x$ and in the
direction $e_i$.

Denote, respectively, by $R_{ijkl}$, $R^\bot_{\alpha\beta ij}$ the components
of the \mo Riemannian curvature tensor and the
curvature operator of the \mo normal bundle
with respect to the tangent frame field $\{E_i\}$ and the \mo normal frame field $\{E_\alpha\}$. Then we have (\cite{w})
\begin{align} \tr
A&=\fr1{2m}(1+m^2\kappa),\quad\tr B=\sum
B^\alpha_{ii}E_\alpha=0,\quad |B|^2=\sum
(B^\alpha_{ij})^2=\fr{m-1}{m}. \label{2.14}
\\
&R_{ijkl}=\sum(B^\alpha_{il}B^\alpha_{jk}
-B^\alpha_{ik}B^\alpha_{jl})+A_{il}\delta_{jk}-A_{ik}\delta_{jl}
+A_{jk}\delta_{il}-A_{jl}\delta_{ik}. \label{2.15}\\
&\hs{2.6cm}R^\bot_{\alpha\beta ij}=\sum(B^\alpha_{jk}B^\beta_{ik}-B^\alpha_{ik}B^\beta_{jk}
).\label{2-16}
\end{align}

We should remark that both equations \eqref{2.15} and \eqref{2-16} have the opposite sign from those in \cite{w} due to the different notations
of the Riemannian curvature tensor. Furthermore, let
$A_{ijk}$, $B^\alpha_{ijk}$ and $C^\alpha_{ij}$ denote, respectively,
the components with respect to the frame fields $\{E_i\}$ and
$\{E_\alpha\}$ of the covariant derivatives of $A$, $B$ and $C$, then the following Ricci identities hold (\cite{w}):
\begin{align}
A_{ijk}-A_{ikj}=&\sum
(B^\alpha_{ik}C^\alpha_j-B^\alpha_{ij}C^\alpha_k), \label{2.17}\\
B^\alpha_{ijk}-B^\alpha_{ikj}=&\delta_{ij}C^\alpha_k
-\delta_{ik}C^\alpha_j, \label{2.18}\\
C^\alpha_{ij}-C^\alpha_{ji}
=&\sum(B^\alpha_{ik}A_{kj}-B^\alpha_{kj}A_{ki}).  \label{2.16}
\end{align}

Denote by $R_{ij}$ the components of the Ricci curvature. Then
by taking trace in \eqref{2.15} and \eqref{2.18}, one obtains
\begin{align}
&R_{ij}=-\sum B^\alpha_{ik}B^\alpha_{kj}+\delta_{ij}\tr
A+(m-2)A_{ij}, \label{2.19}\\
&(m-1)C^\alpha_i=-\sum B^\alpha_{ijj}. \label{2.20}
\end{align}

Moreover, for the higher order covariant derivatives $B^\alpha_{ij\cdots kl}$, we have the following Ricci identities:
\be\label{2-21}
B^\alpha_{ij\cdots kl}-B^\alpha_{ij\cdots lk}=\sum B^\alpha_{qj\cdots}R_{iqkl} +\sum B^\alpha_{iq\cdots}R_{jqkl}+\cdots-\sum B^\beta_{ij\cdots}R^\bot_{\beta\alpha kl}.
\ee

By \eqref{2.14}, \eqref{2.19} and \eqref{2.20}, if $m\geq
3$, then the Blaschke tensor $A$ and the \mo form $C$ are
determined by the \mo metric $g$, \mo second fundamental form
$B$ and the (\mo) normal connection of $x$. Thus the following theorem holds:

\begin{thm}[cf. \cite{w}]\label{wth} Two submanifolds $x:M^m\to
\bbs^{m+p}$ and $\td x:\td M^m\to \bbs^{m+p}$, $m\geq 3$, are \mo
equivalent if and only if they have the same \mo metrics, the same \mo second fundamental forms and the same (\mo) normal connections.
\end{thm}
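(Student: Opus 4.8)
\emph{Proposal.} The asserted equivalence has a trivial direction and a substantive one, so the plan is to dispatch the forward implication in a line and devote the argument to the converse. If $x$ and $\td x$ are \mo equivalent, Theorem~\ref{wth1} furnishes a fixed $T\in O^+(m+p+1,1)$ with $\td Y=T(Y)$; being a linear Lorentzian isometry, $T$ satisfies $\td g=(TY)^*\lagl\cdot,\cdot\ragl_1=Y^*\lagl\cdot,\cdot\ragl_1=g$, commutes with the common Laplacian, hence sends $N$ to $\td N$ by \eqref{2.3}, and carries the whole adapted frame $\{Y,N,Y_i,E_\alpha\}$ to $\{\td Y,\td N,\td Y_i,\td E_\alpha\}$. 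Since $B$ and the normal connection are read off from this frame through $\lagl\cdot,\cdot\ragl_1$, which $T$ preserves, the three invariants agree, giving the ``only if'' part.

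For the converse, assume (after identifying $M$ with $\td M$ through a \mo isometry $\phi$ and fixing a bundle isometry of the \mo normal bundles that preserves the normal connections) that $g=\td g$, $B=\td B$ and $\omega^\beta_\alpha=\td\omega^\beta_\alpha$. First I would observe that these data already force $A=\td A$ and $C=\td C$, which is the one place the hypothesis $m\ge 3$ enters. Indeed, \eqref{2.14} gives $\tr A=\frac1{2m}(1+m^2\kappa)$, a function of $g$ alone, while \eqref{2.20} expresses $(m-1)C^\alpha_i=-\sum B^\alpha_{ijj}$ through covariant derivatives of $B$ taken with the (coinciding) tangential and normal connections. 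Substituting into \eqref{2.19}, the Ricci tensor of the common metric satisfies $R_{ij}=-\sum B^\alpha_{ik}B^\alpha_{kj}+\delta_{ij}\tr A+(m-2)A_{ij}$, and since $m-2\ne 0$ one solves uniquely for $A_{ij}$ in terms of quantities shared by $x$ and $\td x$. Thus every coefficient in the equations of motion \eqref{2-6}, \eqref{2-7}, \eqref{2-8}, namely $\omega^i,\omega^j_i,\omega^\beta_\alpha,A_{ij},B^\alpha_{ij},C^\alpha_i$, is the same for the two immersions.

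With the coefficients matched I would run the standard moving-frame rigidity argument. Assemble the adapted frames into $GL(m+p+2;\bbr)$-valued functions $F=[\,Y\,|\,N\,|\,Y_1\,|\cdots|\,Y_m\,|\,E_{m+1}\,|\cdots|\,E_{m+p}\,]$ and $\td F$ likewise; then \eqref{2-6}, \eqref{2-7}, \eqref{2-8} read $dF=F\Omega$ and $d\td F=\td F\Omega$ with one and the same matrix of $1$-forms $\Omega$ built from the now-common coefficients. Setting $T=\td F F^{-1}$ and differentiating gives $dT=\td F\Omega F^{-1}-\td F\Omega F^{-1}=0$, so $T$ is a constant matrix on the connected $M$. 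By \eqref{2.4}, \eqref{2.5}, the orthonormality of $\{Y_i\}$ and $\{E_\alpha\}$, and the decomposition \eqref{2.6}, both frames have the identical Gram matrix with respect to $\lagl\cdot,\cdot\ragl_1$, whence $T\in O(m+p+1,1)$. Finally $T(Y)=\td Y$ with $Y=\rho(1,x)$ and $\td Y=\td\rho(1,\td x)$ both lying in the forward cone $C^{m+p+1}_+$, so $T\in O^+(m+p+1,1)$, and Theorem~\ref{wth1} returns the \mo equivalence of $x$ and $\td x$.

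The main obstacle is exactly the recovery $A=\td A$: it rests on the invertibility of the coefficient $m-2$ in \eqref{2.19}, which fails precisely at $m=2$, and this is why the theorem is stated for $m\ge 3$. A secondary point deserving care is the well-definedness of $T$: written through local frames it might seem frame-dependent, but since $T$ acts as the canonical correspondence $Y\mapsto\td Y,\,N\mapsto\td N$ on $\bbr Y\oplus\bbr N$, as the map induced by $\phi_*$ on $Y_*(TM)$, and as the fixed bundle isometry on $V$, it is frame-independent, so the local identity $dT=0$ globalizes to a single constant Lorentzian transformation.
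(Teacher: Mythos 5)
Your proof is correct, and its substantive core coincides exactly with the paper's own (very short) argument: the paper proves Theorem \ref{wth} precisely by noting that \eqref{2.14} and \eqref{2.20} recover $\tr A$ and $C$ from the \mo metric, the \mo second fundamental form and the normal connection, and that \eqref{2.19} then determines $A_{ij}$ uniquely because $m-2\neq 0$ when $m\geq 3$ --- which is also the one place you invoke $m\geq 3$. The only difference is what happens after this reduction: the paper simply appeals to Wang's fundamental theorem in \cite{w} (that $g$, $A$, $B$, $C$ form a complete system of \mo invariants), whereas you unpack that citation and prove the congruence directly, via the standard moving-frame integration $d\bigl(\td F F^{-1}\bigr)=0$ giving a constant $T\in O(m+p+1,1)$, orthochronicity from $T(Y)=\td Y\in C^{m+p+1}_+$, and then Theorem \ref{wth1}. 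This buys self-containedness (and your remarks on the frame-independence of $T$ and on connectedness of $M$ are exactly the points one must check in that argument), at the cost of reproving a known result; mathematically the two routes are the same reduction with the black box either cited or opened.
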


\section{The new examples}\label{sec3}

Before proving the main theorem, we need to find more examples of Blaschke parallel submanifolds in the unit sphere $\bbs^{m+p}$ as many as possible with parallel Blaschke tensors and with three distinct Blaschke eigenvalues. We note that, by Zhai-Hu-Wang (\cite{zhw}), all \mo parallel submanifolds in $\bbs^{m+p}$ are necessarily Blaschke parallel ones. This kind of examples are listed in \cite{zhw}. In this section we define a new class of Blaschke parallel examples which are in general not \mo parallel.

{\expl\label{expl3.1}\rm The following three classes of submanifolds have been studied in \cite{ls3} (cf. \cite{zhw}).

$(1)$ The umbilic-free pseudo-parallel submanifolds $\td x:M^m\to \bbs^{m+p}$ with parallel mean curvature $\td H$ and constant scalar curvature $\td S$.

$(2)$ The composition $\td x=\sigma\circ\bar x$ where $\bar x:M^m\to\bbr^{m+p}$ is an umbilic-free pseudo-parallel submanifolds with parallel mean curvature $\bar H$ and constant scalar curvature $\bar S$.

$(3)$ The composition $\td x:=\tau\circ\bar x$ where $\bar x:M^m\to\bbh^{m+p}$ is an umbilic-free pseudo-parallel submanifold with parallel mean curvature $\bar H$ and constant scalar curvature $\bar S$.

\begin{rmk}\label{rmk3.1}\rm It is shown in \cite{ls3} that all the examples $\td x:M^m\to \bbs^{m+p}$ given in (1), (2) and (3) above are Blaschke parallel with vanishing \mo form. Furthermore, $\td x$ in (1) has three distinct Blaschke eigenvalues if and only if it is not minimal and has three distinct principal curvatures in the direction of the mean curvature vector $\td H$, while $\td x$ in (2) (resp. in (3)) has three distinct Blaschke eigenvalues if and only if the corresponding $\bar x:M^m\to\bbr^{m+p}$ (resp. $\bar x:M^m\to\bbh^{m+p}$) is not minimal and has three distinct principal curvatures in the direction of the mean curvature vector $\bar H$. Note that $\td x$ is \mo isotropic, or equivalently, $\td x$ has only one distinct Blaschke eigenvalue, if and only $\td x$ in (1), or $\bar x$ in (2) or in (3) is minimal (\cite{lwz}). In addition,
it is not hard to see that (\cite{zhw}) a submanifold $\td x$ is \mo parallel if and only if $\td x$ in (1), or $\bar x$ in (2) or in (3) is (Euclidean) parallel.\end{rmk}

{\expl\label{expl3.2}\rm Submanifolds ${\rm LS}(\fkm,\fkp,\fkr,{\mu})$.

We start with a multiple parameter data $(\fkm,\fkp,\fkr,{\mu})$ where
$$\fkm:=(m_1,m_2,m_3),\quad\fkp:=(p_1,p_2,p_3),\quad\fkr:=(r_1,r_2,r_3),\quad\mu:=({\mu}_1,{\mu}_2,{\mu}_3)$$
with $m_1,m_2,m_3$ and $p_1,p_2,p_3$ being integers satisfying
$$
m_1,m_2,m_3\geq 1,\quad p_1,p_2,p_3\geq 0;
$$
and with $r_1,r_2,r_3$ and ${\mu}_1,{\mu}_2,{\mu}_3$ being real numbers satisfying
$$
r_1,r_2,r_3>0,\quad r_1^2=r_2^2+r_3^2,\quad {\mu}_1,{\mu}_2,{\mu}_3\geq 0,\ \quad\mu_1+{\mu}_2+{\mu}_3=1.
$$
Denote
$$m:=m_1+m_2+m_3,\quad p:=p_1+p_2+p_3+1.$$}

Since
$$
\det\ldt
m+m_1&m_2&m_3\\ m_1& m+m_2 &m_3\\ m_1&m_2& m+m_3
\rdt=2m^3\neq 0,
$$
there exist real numbers $\lambda_1,\lambda_2,\lambda_3$ that are uniquely determined by
\be\label{lambda}\begin{cases}(m+m_1)\lambda_1+m_2\lambda_2+m_3\lambda_3 =-\fr{m_1}{r^2_1},&\\
m_1\lambda_1+(m+m_2)\lambda_2+m_3\lambda_3=\fr{m_2}{r^2_2},&\\
m_1\lambda_1+m_2\lambda_2+(m+m_3)\lambda_3=\fr{m_3}{r^2_3}.&
\end{cases}\ee
Let $B^0_1,B^0_2,B^0_3$ be real numbers defined by
\be\label{3.2}
m_1B^0_1+m_2B^0_2+m_3B^0_3=0,\quad
B^0_aB^0_b=-(\lambda_a+\lambda_b),\quad a\neq b
\ee
where $1\leq a,b\leq 3$. Clearly, $B^0_1,B^0_2,B^0_3$ are unique up to a common sign. In fact we have
\be\label{b01}
(B^0_a)^2=\fr1{m_a}((m_{a'}+m_{a''})\lambda_a+m_{a'}\lambda_{a'}+m_{a''}\lambda_{a''})
\ee
where $a,a',a''$ is an even permutation of $1,2,3$.

Note that, by \eqref{3.2}
\be\label{3.4}
2\lambda_a=(\lambda_a+\lambda_{a'})+(\lambda_a+\lambda_{a''})-(\lambda_{a'}+\lambda_{a''}) =-B^0_aB^0_{a'}-B^0_aB^0_{a''}+B^0_{a'}B^0_{a''}\ee
for a permutation $a,a',a''$ of $1,2,3$.

Then the following lemma can be shown by a direct computation using \eqref{lambda}, \eqref{3.2}, \eqref{b01} and \eqref{3.4}:
\begin{lem}\label{lem3.1} It holds that
\begin{align}
&2\lambda_1+(B^0_1)^2=-\fr1{r^2_1},\quad 2\lambda_2+(B^0_2)^2=\fr1{r^2_2},\quad 2\lambda_3+(B^0_3)^2=\fr1{r^2_3},\label{3.5}\\
&-\fr{m_1-1}{r^2_1}+(B^0_1)^2=(m+m_1-2)\lambda_1+m_2\lambda_2+m_3\lambda_3
=(m-2)\lambda_1+\sum_am_a\lambda_a,\label{3.6}\\
&\fr{m_2-1}{r^2_2}+(B^0_2)^2=m_1\lambda_1+(m+m_2-2)\lambda_2+m_3\lambda_3
=(m-2)\lambda_2
+\sum_am_a\lambda_a,\label{3.7}\\
&\fr{m_3-1}{r^2_3}+(B^0_3)^2=m_1\lambda_1+m_2\lambda_2+(m+m_3-2)\lambda_3
=(m-2)\lambda_3+\sum_am_a\lambda_a,\label{3.8}\\
&-\fr{m_1(m_1-1)}{r^2_1}+\fr{m_2(m_2-1)}{r^2_2}+\fr{m_3(m_3-1)}{r^2_3}\nnm\\ &\hs{1cm}=(2m_1(m-1)-(m+m_1))\lambda_1+(2m_2(m-1)-(m+m_2))\lambda_2\nnm\\
&\hs{1cm}\quad +(2m_3(m-1)-(m+m_3))\lambda_3,\label{3.9}\\
&m_1(B^0_1)^2+m_2(B^0_2)^2+m_3(B^0_3)^2 =(m+m_1)\lambda_1+(m+m_2)\lambda_2+(m+m_3)\lambda_3,\label{3.10}\\
&-r^2_1B^0_1+r^2_2B^0_2+r^2_3B^0_3=0,\label{3.11}\\
&-r^2_1(B^0_1)^2+r^2_2(B^0_2)^2+r^2_3(B^0_3)^2 =-\lambda_1r^2_1+\lambda_2r^2_2+\lambda_3r^2_3=1.\label{3.12}
\end{align}
\end{lem}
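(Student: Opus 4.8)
The plan is to replace the defining linear system \eqref{lambda} by a transparent scalar form, from which all of \eqref{3.5}--\eqref{3.12} fall out by substitution. Setting $S:=m_1\lambda_1+m_2\lambda_2+m_3\lambda_3$ and observing that the left-hand side of the $a$-th equation of \eqref{lambda} is exactly $m\lambda_a+S$, the system collapses to
\be\label{plancompact}
m\lambda_1+S=-\fr{m_1}{r^2_1},\qquad m\lambda_2+S=\fr{m_2}{r^2_2},\qquad m\lambda_3+S=\fr{m_3}{r^2_3}.
\ee
Likewise, the bracket appearing in \eqref{b01} equals $(m-m_a)\lambda_a+(S-m_a\lambda_a)=(m\lambda_a+S)-2m_a\lambda_a$, so \eqref{b01} reads $m_a(B^0_a)^2=(m\lambda_a+S)-2m_a\lambda_a$. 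Dividing by $m_a$ and inserting \eqref{plancompact} yields \eqref{3.5} at once, the three signs $-,+,+$ being inherited from the right side of \eqref{plancompact}.

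With \eqref{3.5} and \eqref{plancompact} in hand, the identities \eqref{3.6}--\eqref{3.8} are pure substitution: for instance $-\fr{m_1-1}{r^2_1}+(B^0_1)^2=-\fr{m_1}{r^2_1}-2\lambda_1=(m\lambda_1+S)-2\lambda_1=(m-2)\lambda_1+S$, and the middle expressions in \eqref{3.6}--\eqref{3.8} are simply $(m-2)\lambda_a+\sum_b m_b\lambda_b$ regrouped. The two trace-type relations \eqref{3.9} and \eqref{3.10} I would obtain by substituting $m_a(B^0_a)^2$ and $m_a/r^2_a=\pm(m\lambda_a+S)$ from the above and then collecting the coefficients of $S$ and of $\lambda_1+\lambda_2+\lambda_3$; in each case both sides reduce to the same linear combination (e.g. \eqref{3.9} becomes $(2m-3)S-m\sum_a\lambda_a$ on either side).

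For \eqref{3.12} I would establish the middle equality first. Writing $\epsilon_1=-1,\ \epsilon_2=\epsilon_3=1$, relation \eqref{plancompact} gives $\lambda_a=\fr1m\big(\epsilon_a m_a/r^2_a-S\big)$, whence $-\lambda_1r^2_1+\lambda_2r^2_2+\lambda_3r^2_3=\sum_a\epsilon_a\lambda_ar^2_a=\fr1m\big(\sum_a m_a-S\sum_a\epsilon_a r^2_a\big)$. Here the hypothesis $r^2_1=r^2_2+r^2_3$ is decisive, since it forces $\sum_a\epsilon_a r^2_a=-r^2_1+r^2_2+r^2_3=0$; as $\sum_a m_a=m$, the value is $1$. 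The first equality of \eqref{3.12} is then automatic: substituting \eqref{3.5} into $-r^2_1(B^0_1)^2+r^2_2(B^0_2)^2+r^2_3(B^0_3)^2$ produces $3-2(-\lambda_1r^2_1+\lambda_2r^2_2+\lambda_3r^2_3)=3-2=1$.

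The main obstacle is \eqref{3.11}, the only identity linear (rather than quadratic) in the $B^0_a$; because \eqref{3.2} fixes $B^0_1,B^0_2,B^0_3$ only up to a common sign, the signed linear combination cannot be read off directly. My plan is to multiply $T:=-r^2_1B^0_1+r^2_2B^0_2+r^2_3B^0_3$ by each $B^0_a$ in turn, which converts every term into a known quantity through $B^0_aB^0_b=-(\lambda_a+\lambda_b)$ from \eqref{3.2} and the squares from \eqref{3.5}. Using $r^2_1=r^2_2+r^2_3$ to simplify the $\lambda_a$-coefficients and then invoking the value $1$ just proved in \eqref{3.12}, each product should collapse to $T\cdot B^0_a=1-\big(-\lambda_1r^2_1+\lambda_2r^2_2+\lambda_3r^2_3\big)=0$ for $a=1,2,3$. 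Finally I would note that $B^0_1,B^0_2,B^0_3$ cannot all vanish, for otherwise \eqref{3.2} would force $\lambda_a+\lambda_b=0$ for every pair, hence $\lambda_1=\lambda_2=\lambda_3=0$, contradicting \eqref{3.5}. Since some $B^0_a\neq0$ while $T\cdot B^0_a=0$ for all $a$, we conclude $T=0$, which is \eqref{3.11}.
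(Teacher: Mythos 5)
Your proof is correct, and it is essentially the paper's own argument: the paper establishes Lemma \ref{lem3.1} only by appeal to ``a direct computation using \eqref{lambda}, \eqref{3.2}, \eqref{b01} and \eqref{3.4}'', and your write-up is exactly that computation, conveniently organized through the compact form $m\lambda_a+S=\pm m_a/r_a^2$ of the defining system. Your handling of \eqref{3.11} --- multiplying $T=-r_1^2B^0_1+r_2^2B^0_2+r_3^2B^0_3$ by each $B^0_a$ to reduce to known quadratic quantities, then ruling out $B^0_1=B^0_2=B^0_3=0$ --- correctly resolves the common-sign ambiguity in the $B^0_a$, which is the only step that is not immediate substitution.
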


Let
$$\td y=(\td y_0,\td y_1):M_1\to
\bbh^{m_1+p_1}\left(-\fr1{r^2_1}\right)\subset\bbr^{m_1+p_1+1}_1$$ be an immersed
minimal submanifold of dimension $m_1$ with constant scalar
curvature
\be\label{3.20}\td S_1=-\fr{m_1(m_1-1)}{r^2_1}+{\mu}_1\left(m_1(B^0_1)^2 +m_2(B^0_2)^2+m_3(B^0_3)^2-\fr{m-1}{m}\right),\ee
and
\be\label{3.21}
\td y_2: M_2\to \bbs^{m_2+p_2}(r_2)\subset\bbr^{m_2+p_2+1},\quad
\td y_3: M_3\to \bbs^{m_3+p_3}(r_3)\subset\bbr^{m_3+p_3+1}
\ee
be two immersed minimal submanifolds of dimensions $m_2$, $m_3$
with constant scalar curvatures
\begin{align}&\td S_2=\fr{m_2(m_2-1)}{r^2_2}+{\mu}_2\left(m_1(B^0_1)^2 +m_2(B^0_2)^2 +m_3(B^0_3)^2-\fr{m-1}{m}\right),\label{3.1}\\
&\td S_3=\fr{m_3(m_3-1)}{r^2_3}+{\mu}_3\left(m_1(B^0_1)^2 +m_2(B^0_2)^2 +m_3(B^0_3)^2-\fr{m-1}{m}\right),\label{3.1-1}
\end{align}
respectively. Then by \eqref{3.9} and \eqref{3.10}
\begin{align}
\td S_1+\td S_2+\td S_3=&-\fr{m_1(m_1-1)}{r_1^2}+\fr{m_2(m_2-1)}{r_2^2}+\fr{m_3(m_3-1)}{r_3^2}\nnm\\
&-\fr{m-1}{m}+m_1(B^0_1)^2 +m_2(B^0_2)^2+m_3(B^0_3)^2\nnm\\
=&2(m-1)\sum_am_a\lambda_a-\fr{m-1}{m}.
\label{S1+S2}
\end{align}

Set
\be\label{3.22}
\td M^m=M_1\times M_2\times M_3,\quad\td Y=(\td y_0,\td y_1,\td
y_2,\td y_3).
\ee
Then $\td Y:\td M^m\to\bbr^{m+p+2}_1$ is an immersion satisfying $\lagl
\td Y,\td Y\ragl_1=0$ with the induced Riemannian metric
$$g=\lagl d\td Y,d\td Y\ragl_1=-d\td y_0^2+d\td y^2_1+d\td y^2_2+d\td y^2_3.$$
Thus
\be\label{3.23}
(\td M^m,g)=\left(M_1,\lagl d\td y,d\td y\ragl_1\right)\times
\left(M_2,d\td y^2_2\right)\times
\left(M_3,d\td y^2_3\right)\ee
as Riemannian manifolds. Define
\be\label{3.24}
\td x_1=\fr{\td y_1}{\td y_0},\quad \td
x_2=\fr{\td y_2}{\td y_0},\quad \td
x_3=\fr{\td y_3}{\td y_0}, \quad \td x=(\td x_1,\td x_2,\td x_3).
\ee
Then $\td x^2=1$ and thus $\td x:\td M^m\to \bbs^{m+p}$ is an immersed submanifold which we denote simply by ${\rm LS}(\fkm,\fkp,\fkr,{\mu})$. Since
\be\label{3.25}
d\td x=-\fr{d\td y_0}{\td y^2_0}(\td y_1,\td y_2,\td y_3) +\fr1{\td
y_0}(d\td y_1,d\td y_2,d\td y_3), \ee
the induced metric $\td g=d\td x\cdot d\td x$ on $\td M^m$ is related to $g$ by
\be\label{3.26}
\td g=\td y^{-2}_0(-d\td y^2_0+d\td y^2_1+d\td y^2_2+d\td y^2_3)=\td
y^{-2}_0g.
\ee

Denote
\be\label{e0}
\bar E_{\alpha0}:=-(B^0_1\td y_0,B^0_1\td y_1,B^0_2\td y_2,B^0_3\td y_3),
\ee
and let
\begin{align*}
&\{\bar E_\alpha; m+1\leq\alpha\leq m+p_1\},\\
&\{\bar E_\alpha; m+p_1+1\leq\alpha\leq m+p_1+p_2\},\\
&\{\bar E_\alpha; m+p_1+p_2+1\leq\alpha\leq m+p_1+p_2+p_3\}
\end{align*}
be orthonormal normal frame fields of $\td y$, $\td y_2$,  $\td y_3$, respectively, with
$$\bar E_\alpha=(\bar E_{\alpha 0},\bar E_{\alpha 1})\in\bbr^1_1\times\bbr^{m_1+p_1}\equiv\bbr^{m_1+p_1+1}_1, \mb{ for }\alpha=m+1,\cdots,m+p_1.$$
Define
\begin{align}
\td e_\alpha=&(\bar E_{\alpha 1},0,0)-\bar E_{\alpha 0} \td x\in\bbr^{m_1+p_1}\times\bbr^{m_2+p_2+1}\times\bbr^{m_3+p_3+1}\equiv\bbr^{m+p+1},\nnm\\
&\mb{for }\alpha=m+1,\cdots,m+p_1;\label{3-10}\\
\td e_\alpha=&(0,\bar E_\alpha,0)\in\bbr^{m_1+p_1}\times\bbr^{m_2+p_2+1} \times\bbr^{m_3+p_3+1}\equiv\bbr^{m+p+1},\nnm\\
&\mb{for }\alpha=m+p_1+1,\cdots,m+p_1+p_2;\label{3-11}\\
\td e_\alpha=&(0,0,\bar E_\alpha)\in\bbr^{m_1+p_1}\times\bbr^{m_2+p_2+1} \times\bbr^{m_3+p_3+1}\equiv\bbr^{m+p+1},\nnm\\
&\mb{for }\alpha=m+p_1+p_2+1,\cdots,m+p_1+p_2+p_3;\label{3-11.1}\\
\td e_{\alpha_0}=&-(B^0_1\td y_1,B^0_2\td y_2,B^0_3\td y_3)+B^0_1\td y_0\td x.\label{3-11.2}
\end{align}
Then, by Lemma \ref{lem3.1}, $\{\td e_\alpha,\td e_{\alpha0};\ m+1\leq\alpha\leq m+p-1\}$ is an orthonormal normal frame field of ${\rm LS}(\fkm,\fkp,\fkr,{\mu})$.

Hence, by \eqref{3.25}, for $\alpha=m+1,\cdots,m+p_1$
\begin{align} d \td e_\alpha\cdot d\td x=&\ (d\bar E_{\alpha1},0,0)\cdot d\td x-d\bar E_{\alpha 0}\td x d\td x-\bar E_{\alpha 0}d\td x^2\nnm\\
=&\ \td y^{-1}_0(-d\bar E_{\alpha 0}d\td y_0+d\bar E_{\alpha 1}\cdot d\td y_1)-\bar E_{\alpha 0}\td
y^{-2}_0g,\label{3.27}
\end{align}
where the third equality comes from the fact that
\be\label{3.28}
-\bar E_{\alpha 0}\td y_0+\bar E_{\alpha 1}\cdot \td
y_1=-d\bar E_{\alpha 0}\td y_0+d\bar E_{\alpha 1}\cdot\td y_1=0;
\ee
while
\be\label{3.27-1}
d \td e_\alpha\cdot d\td x=\td y^{-1}_0(d\bar E_\alpha\cdot d\td y_2)
\ee
for $\alpha=m+p_1+1,\cdots,m+p_1+p_2$, and
\be\label{3.27-2}
d \td e_\alpha\cdot d\td x=\td y^{-1}_0(d\bar E_\alpha\cdot d\td y_3)
\ee
for $\alpha=m+p_1+p_2+1,\cdots,m+p-1$. Furthermore,
\begin{align} d \td e_{\alpha 0}\cdot d\td x=&\td y^{-2}_0dy_0(B^0_1d\td y_1,B^0_2d\td y_2,B^0_3d\td y_3)\cdot (\td y_1,\td y_2,\td y_3)\nnm\\
&-\td y^{-1}_0(B^0_1d\td y_1,B^0_2d\td y_2,B^0_3d\td y_3)\cdot (d\td y_1,d\td y_2,d\td y_3)\nnm\\
&+B^0_1d\td y_0\td x d\td x+B^0_1\td y_0d\td x d\td x\nnm\\
=&-\td y^{-1}_0((B^0_2-B^0_1)d\td y^2_2+(B^0_3-B^0_1)d\td y^2_3).\label{3.27-3}
\end{align}
It then follows that, if we denote by
$$
\bar h_{M_1}=\sum_{\alpha=m+1}^{m+p_1}\bar h^\alpha \bar E_\alpha,\quad \bar h_{M_2}=\sum_{\alpha=m+p_1+1}^{m+p_1+p_2}\bar h^\alpha \bar E_\alpha,\quad
\bar h_{M_3}=\sum_{\alpha=m+p_1+p_2+1}^{m+p-1}\bar h^\alpha \bar E_\alpha$$
the second fundamental forms of $\td y$, $\td y_2$ and $\td y_3$, respectively, then the second fundamental form
$$\td h=\sum_{\alpha=m+1}^{m+p-1}\td h^\alpha \td e_\alpha+\td h^{\alpha_0}\td e_{\alpha_0}$$
of ${\rm LS}(\fkm,\fkp,\fkr,{\mu})$ is given as follows:
\begin{align}
&\td h^\alpha=-d \td e_\alpha\cdot d\td x=y^{-1}_0\bar h^\alpha+\bar E_{\alpha 0}\td y^{-2}_0g, \quad \mb{for }\alpha=m+1,\cdots,m+p_1;\label{3.29}\\
&\td h^\alpha=-d \td e_\alpha\cdot d\td x=\td
y^{-1}_0\bar h^\alpha,\quad \mb{for }\alpha=m+p_1+1,\cdots,m+p-1;\label{3.29-1}\\
&\td h^{\alpha_0}=\td y^{-1}_0((B^0_2-B^0_1)d\td y^2_2+(B^0_3-B^0_1)d\td y^2_3).
\label{3.29-2}
\end{align}

Let
$$\{E_i\,;1\leq i\leq m_1\},\quad \{E_i\,;m_1+1\leq i\leq
m_1+m_2\},\quad \{E_i\,;m_1+m_2+1\leq i\leq
m\}$$
be local orthonormal frame fields for
$$(M_1,\lagl d\td y,d\td y\ragl_1), \quad (M_2,d\td y^2_2),\quad (M_3,d\td y^2_3),$$
respectively. Then $\{E_i\,;1\leq i\leq m\}$ is a local
orthonormal frame field for $(M^m,g)$.

Put $\td e_i=\td y_0E_i$,
$i=1,\cdots,m$. Then $\{\td e_i\,;1\leq i\leq m\}$ is a local
orthonormal frame field for $(M^m,\td g)$. Thus for $\alpha=m+1,\cdots,m+p_1$,
\be\label{3.30}
\left\{\aligned
\td h^\alpha_{ij}=&\ \td h^\alpha(\td e_i,\td e_j)=\td y^2_0\td h^\alpha(E_i,E_j)=\td y_0
\bar h^\alpha(E_i,E_j)+\bar E_{\alpha 0}\,g(E_i,E_j)\\
=&\ \td y_0\bar h^\alpha_{ij}+\bar E_{\alpha 0}\delta_{ij},
\quad \mb{when\ } 1\leq i,j\leq m_1,\\
\td h^\alpha_{ij}=&\ \bar E_{\alpha 0}\delta_{ij},\quad  \mb{otherwise};
\endaligned\right.
\ee
while
\be\label{3.37}
\left\{\aligned
\td h^\alpha_{ij}=&\ \td h^\alpha(\td e_i,\td e_j)=\td y^2_0\td h^\alpha(E_i,E_j)=\td y_0
\bar h^\alpha(E_i,E_j)=\td y_0\bar h^\alpha_{ij},\\
&\mb{when\ } m_1+1\leq i,j\leq m_1+m_2,\\
\td h^\alpha_{ij}=&\ 0,\quad  \mb{otherwise}
\endaligned\right.
\ee
for $\alpha=m+p_1+1,\cdots,m+p_1+p_2$, and
\be\label{3.9-1}
\left\{\aligned
\td h^\alpha_{ij}=&\ \td h^\alpha(\td e_i,\td e_j)=\td y^2_0\td h^\alpha(E_i,E_j)=\td y_0
\bar h^\alpha(E_i,E_j)=\td y_0\bar h^\alpha_{ij},\\
&\mb{when\ } m_1+m_2+1\leq i,j\leq m,\\
\td h^\alpha_{ij}=&\ 0,\quad \mb{otherwise}
\endaligned\right.
\ee
for $\alpha=m+p_1+p_2+1,\cdots,m+p-1$. Furthermore
\be\label{3.9-2}
\td h^{\alpha_0}_{ij}=\td h^{\alpha_0}(\td e_i,\td e_j)=\td y^2_0\td h^{\alpha_0}(E_i,E_j)=\begin{cases}
\td y_0
(B^0_2-B^0_1)\delta_{ij},
&\mb{for\ } m_1+1\leq i,j\leq m_1+m_2,\\
\td y_0 (B^0_3-B^0_1)\delta_{ij},
&\mb{for\ } m_1+m_2+1\leq i,j\leq m,\\
0,& \mb{otherwise}.
\end{cases}
\ee

Since $\td y$, $\td y_2$ and $\td y_3$ are minimal, the mean
curvature
$$\td H=\fr1m\left(\sum_{\alpha=m+1}^{m+p-1}\sum_{i=1}^m\td h^\alpha_{ii}\td e_\alpha+\sum_{i=1}^m\td h^{\alpha_0}_{ii}\td e_{\alpha_0}\right)$$
of ${\rm LS}(\fkm,\fkp,\fkr,{\mu})$ is given by
\begin{align}
\td H^\alpha=&\fr1m\sum_{i=1}^m\td h^\alpha_{ii}
=\fr{\td y_0}m\sum_{i=1}^{m_1}\bar h^\alpha_{ii}+\bar E_{\alpha 0}
=\bar E_{\alpha 0},\quad\mb{ for }m+1\leq\alpha\leq m+p_1;\label{3.31}\\
\td H^\alpha=&\fr1m\sum_{i=1}^m\td h^\alpha_{ii}
=\fr{\td y_0}m\sum_{i=m_1+1}^{m_1+m_2}\bar h^\alpha_{ii}=0,\quad\mb{ for }m+p_1+1\leq\alpha\leq m+p_1+p_2;\label{3.31-1}\\
\td H^\alpha=&\fr1m\sum_{i=1}^m\td h^\alpha_{ii}
=\fr{\td y_0}m\sum_{i=m_1+m_2+1}^{m}\bar h^\alpha_{ii}=0,\quad\mb{ for }m+p_1+p_2+1\leq\alpha\leq m+p-1;\label{3.31-2}\\
\td H^{\alpha_0}=&\fr1m\sum_{i=1}^m\td h^{\alpha_0}_{ii}
=\fr{\td y_0}m(m_2(B^0_2-B^0_1)+m_3(B^0_3-B^0_1))=-\td y_0 B^0_1.\label{3.31-3}
\end{align}

From \eqref{3.2}, \eqref{3.9}, \eqref{3.10}, \eqref{S1+S2}, \eqref{3.30}--\eqref{3.31-3} and the Gauss equations of $\td y$, $\td y_2$ and $\td y_3$, we find
\begin{align}
|\td h|^2=&\td y^2_0\sum_{\alpha=m+1}^{m+p_1}\sum_{i,j=1}^{m_1}(\bar h^\alpha_{ij})^2+m\sum_{\alpha=m+1}^{m+p_1}(\bar E_{\alpha 0})^2+\td y^2_0\sum_{\alpha=m+p_1+1}^{m+p_1+p_2}\sum_{i,j=m_1+1}^{m_1+m_2}(\bar h^\alpha_{ij})^2\nnm\\
&+\td y^2_0\sum_{\alpha=m+p_1+p_2+1}^{m+p-1}\sum_{i,j=m_1+m_2+1}^m(\bar h^\alpha_{ij})^2+\td y^2_0(m_2(B^0_2-B^0_1)^2+m_3(B^0_3-B^0_1)^2)\nnm\\
=&\fr{m-1}{m}\td y^2_0+m\sum_{\alpha=m+1}^{m+p_1}(\bar E_{\alpha 0})^2 +m\td y^2_0(B^0_1)^2,\\
|\td H|^2=&\sum_{\alpha=m+1}^{m+p_1}(\td H^\alpha)^2 +\sum_{\alpha=m+p_1+1}^{m+p_1+p_2}(\td H^\alpha)^2
+\sum_{\alpha=m+p_1+p_2+1}^{m+p-1}(\td H^\alpha)^2 +(\td H^{\alpha_0})^2\nnm\\
=&\sum_{\alpha=m+1}^{m+p_1}(\bar E_{\alpha 0})^2+\td y^2_0(B^0_1)^2.
\end{align}
It then follows that
$$|\td h|^2-m|\td
H|^2=\fr{m-1}{m}\td y^2_0>0,$$
implying that $\td x$ is umbilic-free, and the \mo factor $\td \rho=\td y_0$. So $\td Y$ is the \mo position of ${\rm LS}(\fkm,\fkp,\fkr,{\mu})$. Consequently,
the \mo metric of ${\rm LS}(\fkm,\fkp,\fkr,{\mu})$ is nothing but $\lagl d\td Y,d\td Y
\ragl_1=g$. Furthermore, if we denote by $\{\omega^i\}$ the local coframe field on $M^m$ dual to $\{E_i\}$, then the \mo second fundamental form
$$
\td B=\sum_{\alpha=m+1}^{m+p}\td B^\alpha \Phi(\td e_\alpha)\equiv
\sum_{\alpha=m+1}^{m+p}\td B^\alpha_{ij}\omega^i\omega^j \Phi(\td e_\alpha)
$$ of ${\rm LS}(\fkm,\fkp,\fkr,{\mu})$ is given by
\begin{align}
\td B^\alpha=&\td \rho^{-1}\sum(\td h^\alpha_{ij}-\td H^\alpha\delta_{ij})\omega^i\omega^j
=\sum_{i,j=1}^{m_1}\bar h^\alpha_{ij}\omega^i\omega^j,\nnm\\
&\ \mb{ for }\alpha=m+1,\cdots,m+p_1; \label{3.32}\\
\td B^\alpha=&\td \rho^{-1}\sum(\td h^\alpha_{ij}-\td H^\alpha\delta_{ij})\omega^i\omega^j
=\sum_{i,j=m_1+1}^{m_1+m_2}\bar h^\alpha_{ij}\omega^i\omega^j,\nnm\\
&\ \mb{ for }\alpha=m+p_1+1,\cdots,m+p_1+p_2, \label{3.32_1}\\
\td B^\alpha=&\td \rho^{-1}\sum(\td h^\alpha_{ij}-\td H^\alpha\delta_{ij})\omega^i\omega^j
=\sum_{i,j=m_1+m_2+1}^{m}\bar h^\alpha_{ij}\omega^i\omega^j,\nnm\\
&\ \mb{ for }\alpha=m+p_1+p_2+1,\cdots,m+p-1, \label{3.32_2}\\
\td B^{\alpha_0}=&B^0_1\sum_{i=1}^{m_1}(\omega^i)^2 +B^0_2\sum_{i=m_1+1}^{m_1+m_2}(\omega^i)^2
+B^0_3\sum_{i=m_1+m_2+1}^{m}(\omega^i)^2, \label{3.32_3}
\end{align}
or, equivalently
\be\label{B}
\td B^\alpha_{ij}=\begin{cases} \bar h^\alpha_{ij},& \mb{if } m+1\leq\alpha\leq m+p_1,\ 1\leq i,j\leq m_1,\\
&\mb{or } m+p_1+1\leq\alpha\leq m+p_1+p_2,\ m_1+1\leq i,j\leq m_1+m_2,\\
&\mb{or } m+p_1+p_2+1\leq\alpha\leq m+p-1,\ m_1+m_2+1\leq i,j\leq m,\\
B^0_1\delta_{ij},&\mb{if } \alpha=\alpha_0,\ 1\leq i,j\leq m_1,\\
B^0_2\delta_{ij},&\mb{if } \alpha=\alpha_0,\ m_1+1\leq i,j\leq m_1+m_2,\\
B^0_3\delta_{ij},&\mb{if } \alpha=\alpha_0,\ m_1+m_2+1\leq i,j\leq m\\
0,&\mb{otherwise.}
\end{cases}
\ee

On the other hand, since the \mo metric $g$ is the direct product of $\lagl d\td y,d\td y\ragl_1$, $d\td y_2\cdot d\td y_2$ and $d\td y_3\cdot d\td y_3$, one finds by the minimality and the Gauss equations of $\td y$, $\td y_2$
and $\td y_3$ that the Ricci tensor of $g$ is given as
follows:
\begin{align}
R_{ij}=&-\fr{m_1-1}{r^2_1}\delta_{ij}
-\sum_{\alpha_1}\sum_{k=1}^{m_1}\bar h^{\alpha_1}_{ik}\bar h^{\alpha_1}_{kj},\quad\mb{if\
} 1\leq
i,j\leq m_1, \label{3.33}\\
R_{ij}=&\ \fr{m_2-1}{r^2_2}\delta_{ij} -\sum_{\alpha_2}\sum_{k=m_1+1}^{m_1+m_2}\bar h^{\alpha_2}_{ik}\bar h^{\alpha_2}_{kj},\quad\mb{if\ } m_1+1\leq
i,j\leq m_1+m_2, \label{3.34}\\
R_{ij}=&\ \fr{m_3-1}{r^2_3}\delta_{ij} -\sum_{\alpha_3}\sum_{k=m_1+m_2+1}^{m}\bar h^{\alpha_3}_{ik}\bar h^{\alpha_3}_{kj},\quad\mb{if\ } m_1+m_2+1\leq
i,j\leq m, \label{3.34-1}\\
R_{ij}=&\ 0,\quad\mb{otherwise}, \label{3.35}
\end{align}
where
\begin{align*}&m+1\leq\alpha_1\leq m+p_1,\quad m+p_1+1\leq\alpha_2\leq m+p_1+p_2,\\ &m+p_1+p_2+1\leq\alpha_3\leq m+p-1.
\end{align*}
On the other hand,
by the definitions of $\td y$,  $\td y_2$ and $\td y_3$, the trace of $A$ is given by
\be\label{3.56}
\tr A=\fr1{2m}(1+m^2\kappa)=\sum_am_a\lambda_a.
\ee
Since $m\geq 3$, it follows by \eqref{2.19}, \eqref{3.6} and \eqref{B}--\eqref{3.56} that the Blaschke tensor of ${\rm LS}(\fkm,\fkp,\fkr,{\mu})$ is given by $A=\sum
A_{ij}\omega^i\omega^j$ where, for $1\leq i,j\leq m_1$,
\begin{align}
A_{ij}%=&\fr1{m-2}\left(R_{ij}+\sum_{\alpha,k}\td B^\alpha_{ik}\td B^\alpha_{kj}-\delta_{ij}\tr A\right)\nnm\\
=&\fr1{m-2}\left(-\fr{m_1-1}{r^2} +(B^0_1)^2-\sum_am_a\lambda_a\right)\delta_{ij}
=\lambda_1\delta_{ij}.\label{3.38}
\end{align}
Similarly,
\begin{align}
A_{ij}=&\lambda_2\delta_{ij},\quad\mb{for } m_1+1\leq
i,j\leq m_1+m_2, \label{3.39}\\
A_{ij}=&\lambda_3\delta_{ij},\quad\mb{for } m_1+m_2+1\leq
i,j\leq m, \label{3.39-1}\\
A_{ij}=&\ 0,\quad\mb{otherwise}. \label{3.40}
\end{align}

Therefore, $A$ has constant eigenvalues $\lambda_1,\lambda_2,\lambda_3$.
It follows that ${\rm LS}(\fkm,\fkp,\fkr,{\mu})$ is Blaschke parallel since $\omega^j_i=0$ for $A_{ii}\neq A_{jj}$.

\begin{prop} For each of the submanifolds ${\rm LS}(\fkm,\fkp,\fkr,{\mu})$ defined in Example \ref{expl3.2}, we have

$(1)$ The \mo form $C$ vanishes identically;

$(2)$ The Blaschke eigenvalues $\lambda_1,\lambda_2,\lambda_3$ are distinct if and only if $m_3r^2_2\neq m_2r^2_3$;

$(3)$ The \mo second fundamental form $B$ is parallel if and only if
$$\td y:M_1\to \bbh^{m_1+p_1}\left(-\fr1{r^2_1}\right),\  \td y_2:M_2\to \bbs^{m_2+p_2}(r_2)\ \mb{ and }\ \td y_3:M_3\to \bbs^{m_3+p_3}(r_3)$$
are all parallel as Riemannian submanifolds. Furthermore, if it is the case, then $\td y(M_1)$ is isometric to the totally geodesic hyperbolic space $\bbh^{m_1}\left(-\fr1{r^2_1}\right)$ and $\td y$ can be taken as the standard embedding of $\bbh^{m_1}\left(-\fr1{r^2_1}\right)$ in $\bbh^{m_1+p_1}\left(-\fr1{r^2_1}\right)$.
\end{prop}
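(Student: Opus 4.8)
The three assertions share one common ingredient: the block structure of the \mo normal connection of ${\rm LS}(\fkm,\fkp,\fkr,{\mu})$. Since the isomorphism $\Phi$ preserves normal connections, it suffices to analyze the pairings $d\td e_\gamma\cdot\td e_\delta$ for the frame $\{\td e_\alpha,\td e_{\alpha_0}\}$ of \eqref{3-10}--\eqref{3-11.2}. My first step is to prove that this connection is block-diagonal, i.e. $d\td e_\gamma\cdot\td e_\delta=0$ whenever $\td e_\gamma,\td e_\delta$ belong to different blocks among the three factor-blocks and the line $\bbr\td e_{\alpha_0}$. The computation is a direct differentiation using \eqref{3.25}: for two indices lying in different spherical factors the pairing vanishes because the corresponding vectors sit in mutually orthogonal Euclidean summands of $\bbr^{m+p+1}$; all pairings with $\td e_{\alpha_0}$ vanish after substituting the relations $\bar E_{\beta 1}\cdot d\td y_1=\bar E_{\beta 0}\,d\td y_0$ and $\bar E_{\beta 1}\cdot\td y_1=\bar E_{\beta 0}\td y_0$ (coming from $\lagl\bar E_\beta,d\td y\ragl_1=\lagl\bar E_\beta,\td y\ragl_1=0$) together with $\td e_{\alpha_0}\cdot\td x=0$ and $\td e_{\alpha_0}\cdot d\td x=0$. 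Consequently the \mo normal connection splits as the orthogonal direct sum of the three intrinsic normal connections of $\td y,\td y_2,\td y_3$ and the flat rank-one connection on $\bbr\,\Phi(\td e_{\alpha_0})$. This verification is the main obstacle, since $\td e_{\alpha_0}$ mixes the hyperbolic and spherical factors through $\td x$ and its treatment requires systematic use of Lemma \ref{lem3.1} and the normality relations of the three minimal immersions; once it is in place, parts $(1)$ and $(3)$ follow almost formally.

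Granting this splitting, part $(1)$ is immediate from \eqref{2.20}: one has $(m-1)C^\alpha_i=-\sum_j B^\alpha_{ijj}$, so it suffices to see $\sum_jB^\alpha_{ijj}=0$ for every normal index. For $\alpha=\alpha_0$ the component $\td B^{\alpha_0}$ of \eqref{3.32_3} is block-constant, hence parallel for the Levi-Civita connection of the product metric $g$, while the block-diagonality just established kills the normal-connection contribution; thus $\td B^{\alpha_0}$ is parallel and its divergence vanishes. For $\alpha$ in the $a$-th block the same splitting reduces the covariant derivative of $\td B^\alpha=\bar h^\alpha$ to the intrinsic covariant derivative of the second fundamental form of the corresponding minimal factor, so the Codazzi equation in the ambient space form gives $\sum_j\bar h^\alpha_{ijj}=m_a\bar H^\alpha_{,i}$, which is zero by minimality. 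Hence $C\equiv 0$.

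Part $(2)$ is elementary linear algebra on the defining system \eqref{lambda}. Subtracting its three equations pairwise and using $m=m_1+m_2+m_3$ I obtain
\begin{align}
m(\lambda_2-\lambda_1)&=\fr{m_1}{r_1^2}+\fr{m_2}{r_2^2},\quad
m(\lambda_3-\lambda_1)=\fr{m_1}{r_1^2}+\fr{m_3}{r_3^2},\nnm\\
m(\lambda_3-\lambda_2)&=\fr{m_3}{r_3^2}-\fr{m_2}{r_2^2}.\nnm
\end{align}
The first two right-hand sides are strictly positive, so $\lambda_1<\lambda_2$ and $\lambda_1<\lambda_3$ hold unconditionally; therefore $\lambda_1,\lambda_2,\lambda_3$ are mutually distinct if and only if $\lambda_2\neq\lambda_3$, which by the third identity is equivalent to $m_3r_2^2\neq m_2r_3^2$. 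For the main statement of part $(3)$, the block-diagonal splitting of both the Levi-Civita connection of $g$ and the \mo normal connection shows that the equation $\nabla\td B=0$ decouples into the vanishing of the intrinsic $\nabla\bar h^\alpha$ on each factor (the $\td B^{\alpha_0}$-part being always parallel, as noted above). Thus $\td B$ is parallel precisely when $\td y,\td y_2,\td y_3$ all have parallel second fundamental form, i.e. are parallel Riemannian submanifolds.

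Finally, for the last clause of part $(3)$ I would single out the minimal factor $\td y$ in $\bbh^{m_1+p_1}(-1/r_1^2)$ and apply the pointwise Simons identity. Since $\nabla\bar h=0$ forces $|\bar h|^2$ to be constant, the identity reduces to the purely algebraic relation $0=m_1c\,|\bar h|^2-Q$, where $c=-1/r_1^2<0$ and $Q=\sum_{\alpha,\beta}\big(\tr(A_\alpha A_\beta)\big)^2+\sum_{\alpha,\beta}\|A_\alpha A_\beta-A_\beta A_\alpha\|^2\geq 0$ is built from the shape operators $A_\alpha$. As the left term $m_1c\,|\bar h|^2\leq 0$ while $Q\geq 0$, both must vanish, and $Q=0$ already forces $\bar h\equiv 0$. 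Hence $\td y$ is totally geodesic, so $\td y(M_1)$ is an open part of the standard totally geodesic $\bbh^{m_1}(-1/r_1^2)\subset\bbh^{m_1+p_1}(-1/r_1^2)$ and $\td y$ may be taken as its standard embedding. (Note that the analogous conclusion fails for the spherical factors $\td y_2,\td y_3$, where $c>0$ makes the Simons term $m_ac\,|\bar h|^2$ nonnegative and admits genuine minimal parallel examples; this is exactly why only the hyperbolic factor is forced to be totally geodesic.)
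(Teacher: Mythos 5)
Your proof is correct, and for parts (1), (2) and the main equivalence in (3) it follows essentially the route the paper intends: the paper actually omits these arguments, referring to Proposition 3.1 of \cite{ls3}, and that proof rests precisely on the ingredients you supply — the block-diagonal splitting of the \mo normal connection with respect to the frame \eqref{3-10}--\eqref{3-11.2}, the identity \eqref{2.20} combined with the Codazzi equation and minimality of the factors for (1) (your observation that $\td B^{\alpha_0}$ in \eqref{3.32_3} is block-constant hence parallel is exactly the right point), the decoupling of $\nabla\td B=0$ for (3), and the pairwise differences of the linear system \eqref{lambda} for (2), which is what the paper means by ``direct from \eqref{lambda}.'' The one place where you genuinely diverge is the last clause of (3): the paper, following \cite{ls3}, simply cites Takeuchi's theorem that a connected minimal submanifold with parallel second fundamental form in a real space form of non-positive curvature must be totally geodesic, whereas you re-derive exactly the needed special case from the Simons (Chern--do Carmo--Kobayashi) identity: with $\nabla\bar h=0$ and minimality the identity collapses to the algebraic relation $m_1c\,|\bar h|^2=Q$ with $c=-1/r_1^2<0$ and $Q\geq 0$, forcing $\bar h\equiv 0$. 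This substitution is valid and self-contained, and it has the extra merit of making transparent why only the hyperbolic factor is forced to be totally geodesic while the spherical factors $\td y_2,\td y_3$ may be genuinely curved minimal parallel submanifolds; the cost is only length, the paper's citation being shorter but importing an external classification-type result.
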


\begin{proof} The proof of (1) and (3) is omitted here since it is similar to that of Proposition 3.1 in \cite{ls3}; The conclusion (2) is direct from \eqref{lambda}.\end{proof}

\begin{rmk}\rm It is not hard to show that, if $m_3r^2_2=m_2r^2_3$, then $\lambda_2=\lambda_3$. In this case, ${\rm LS}(\fkm,\fkp,\fkr,{\mu})$ has two distinct Blaschke eigenvalues and is included as one special case of Example 3.1 or Example 3.2 in \cite{ls3}.\end{rmk}

\section{Proof of the main theorem}

Let $x:M^m\to \bbs^{m+p}$ be an umbilic-free submanifold in $\bbs^{m+p}$ satisfying all the conditions in the main theorem, and $\lambda_1,\lambda_2,\lambda_3$ be the three distinct Blaschke eigenvalues of $x$. Since the \mo form $C\equiv 0$ and the Blaschke tensor $A$ is parallel, $(M,g)$ is isometric to a direct product of three Riemannian manifolds $(M_1,g^{(1)})$, $(M_2,g^{(2)})$ and $(M_3,g^{(3)})$ with
$$m_1:=\dim M_1,\quad m_2:=\dim M_2, \quad m_3:=\dim M_3$$
such that, under the orthonormal frame field $\{E_i\}$ of $(M^m,g)$ satisfying
$$E_1,\cdots,E_{m_1}\in TM_1,\ E_{m_1+1},\cdots,E_{m_1+m_2}\in TM_2,\ E_{m_1+m_2+1},\cdots,E_m\in TM_3,$$
the components $A_{ij}$ of $A$ with respect to $\{E_i\}$ are diagonalized as follows:
\be\label{4-1}
A_{i_1j_1}=\lambda_1\delta_{i_1j_1},\ A_{i_2j_2}=\lambda_2\delta_{i_2j_2},\ A_{i_3j_3}=\lambda_3\delta_{i_3j_3},\ A_{i_1j_2}=A_{i_2j_3}=A_{i_1j_3}=0,
\ee
where and from now on we agree with
$$
1\leq i_1,j_1,k_1,\cdots\leq m_1,\  m_1+1\leq i_2,j_2,k_2,\cdots\leq m_1+m_2,\ m_1+m_2+1\leq i_3,j_3,k_3,\cdots\leq m.
$$

Furthermore, as done in Section \ref{sec2}, write $B=\sum B_{ij}^\alpha\omega^i\omega^j E_\alpha$ for some \mo normal frame field $\{E_\alpha\}$, where $\{\omega^i\}$ is the dual of $\{E_i\}$. Then, by $C\equiv 0$ and \eqref{2.16}, the corresponding components $B^\alpha_{ij}$ satisfy
\be\label{4-2}
B^\alpha_{i_1i_2}=B^\alpha_{i_1i_3}=B^\alpha_{i_2i_3}\equiv 0,\mb{ for all }\alpha,i_1,i_2,i_3.
\ee

In general, we have
\begin{lem}\label{lem4.1} It holds that
\be\label{4-2.1}
B^\alpha_{ij\cdots k}\equiv 0,
\ee
if there exist two of the indices $i,j,\cdots,k$ assuming the forms $i_a$, $i_b$ with $a\neq b$,
where $ij\cdots k$ denotes a multiple index of order no less than $2$.
\end{lem}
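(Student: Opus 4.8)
The plan is to establish \eqref{4-2.1} by strong induction on the order $N$ of the multiple index $ij\cdots k$. The base case $N=2$ is precisely \eqref{4-2}, which has already been obtained from $C\equiv 0$, the Ricci identity \eqref{2.16} and the block-diagonalization \eqref{4-1} of $A$. For the inductive step I fix $N\ge 3$, assume the assertion for every multiple index of order between $2$ and $N-1$ and for all bundle indices, and examine a component $B^\alpha_{I}$ with $I=ij\cdots k$ of order $N$ in which at least two entries lie in different blocks. Writing $I=Jk$ with $k$ the last (differentiation) index, there are exactly two possibilities: either the mismatch already occurs inside the first $N-1$ entries $J$ (Case~(i)), or $J$ lies entirely in one block, say block $a$, while $k=k_b$ lies in a different block $b$ (Case~(ii)).

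In Case~(i) I would exploit the fact, established at the start of this section, that $(M,g)$ splits as the Riemannian product $M_1\times M_2\times M_3$, so that its Levi-Civita connection is block-diagonal: the connection forms $\omega^{q}_{j_s}$ vanish unless $q$ and $j_s$ belong to the same block. Expanding $B^\alpha_{Jk}$ by the covariant-derivative formula, the term $E_k(B^\alpha_J)$ vanishes because $B^\alpha_J=0$ by the inductive hypothesis ($J$ has order $N-1$ and carries the mismatch); each tangential connection term is a multiple of some $B^\alpha_{J'}$ with $J'$ obtained from $J$ by replacing one index \emph{within its own block}, so $J'$ still carries the mismatch and $B^\alpha_{J'}=0$ by induction; and each normal connection term is a multiple of $B^\beta_J=0$. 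Hence $B^\alpha_{Jk}=0$.

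Case~(ii) is the heart of the argument, since now no proper subindex of $I$ carries a mismatch and the inductive hypothesis cannot be applied directly. The idea is to commute the offending last index $k_b$ with its predecessor, pushing it into the first $N-1$ slots and thereby manufacturing a mismatch there so as to reduce to Case~(i). When $N=3$ the predecessor is the second (tensor) slot and I would use \eqref{2.18} with $C\equiv 0$, which gives the exact identity $B^\alpha_{i_aj_ak_b}=B^\alpha_{i_ak_bj_a}$; the right-hand side is a covariant derivative of $B^\alpha_{i_ak_b}$, whose two tensor indices already lie in different blocks, so it vanishes by Case~(i). When $N\ge 4$ the predecessor is itself a differentiation index, and I would invoke the Ricci identity \eqref{2-21} to interchange the last two differentiation indices $j_{N-1}$ (block $a$) and $k_b$ (block $b$); the transposed term $B^\alpha_{\cdots k_b\,j_{N-1}}$ then carries the mismatch among its first $N-1$ entries and vanishes by Case~(i).

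The remaining point, which I expect to be the main obstacle, is to verify that all curvature corrections produced by \eqref{2-21} vanish. These are of two kinds. The Riemannian-curvature terms are multiples of $R_{pq\,j_{N-1}k_b}$, and since the last two entries $j_{N-1}$ and $k_b$ lie in different factors of the product $(M,g)$, each such component vanishes, because the curvature tensor of a Riemannian product is supported on quadruples of indices lying in a single factor. The normal-curvature term is a multiple of $R^\bot_{\beta\alpha\,j_{N-1}k_b}$; evaluating it through \eqref{2-16} yields a sum of products $B^\alpha_{k_b t}B^\beta_{j_{N-1}t}$ together with its symmetric partner, in which the first factor forces $t$ into block $b$ while the second forces $t$ into block $a$ by \eqref{4-2}, so every summand is zero. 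Thus \eqref{2-21} collapses to $B^\alpha_{\cdots j_{N-1}k_b}=B^\alpha_{\cdots k_bj_{N-1}}=0$, closing Case~(ii) and the induction. The genuinely delicate part is the bookkeeping of which indices remain in which block after each transposition or connection-induced index change, so as to guarantee that a mismatch is never inadvertently destroyed before the inductive hypothesis is invoked.
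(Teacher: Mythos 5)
Your proof is correct and follows essentially the same route as the paper's: induction on the order, with Case (i) handled by the block-diagonal Levi-Civita and normal connections, and Case (ii) by commuting the last index via the Ricci identity \eqref{2-21}, the vanishing of the product-metric curvature components, and the vanishing of $R^\bot_{\alpha\beta i_aj_b}$ computed from \eqref{2-16} and \eqref{4-2}. Your only refinement is to treat the order-$3$ case separately through the Codazzi equation \eqref{2.18} with $C\equiv 0$, a distinction the paper glosses over by citing \eqref{2-21} uniformly.
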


\begin{proof}
Due to \eqref{4-2} and the method of induction, it suffices to prove that if \eqref{4-2.1} holds then
\be\label{4-2.2}
B^\alpha_{ij\cdots kl}\equiv 0\ee
for indices $i,j,\cdots,k,l$ in which there exist two assuming the forms $i_a, i_b$ with $a\neq b$.

In fact, we only need to consider the following two cases:

(i) There exist two of the indices $i,j,\cdots,k$ which assume the forms $i_a$, $i_b$ with $a\neq b$.

In this case, we use \eqref{4-2.1} and $\omega^{j_b}_{i_a}=0$ $(a\neq b)$ to find
$$
B^\alpha_{ij\cdots kl}\omega^l=dB^\alpha_{ij\cdots k}-\sum B^\alpha_{lj\cdots k}\omega^l_i
-\sum B^\alpha_{il\cdots k}\omega^l_j-\cdots -\sum B^\alpha_{ij\cdots l}\omega^l_k
+\sum B^\beta_{ij\cdots k}\omega_\beta^\alpha\\
\equiv 0.
$$
So \eqref{4-2.2} is true.

(ii) Either $1\leq i,j,\cdots,k\leq m_1$, or $m_1+1\leq i,j,\cdots,k\leq m_1+m_2$, or $m_1+m_2+1\leq i,j,\cdots,k\leq m$.

Without loss of generality, we assume the first. Then it must be that $l=j_a$ for $a=2$ or $a=3$.
Note that by \eqref{2-16} and \eqref{4-2},
\be\label{4-2.3}
R^\bot_{\alpha\beta i_1j_a}=\sum_q(B^\alpha_{j_aq}B^\beta_{i_1q}-B^\alpha_{i_1q}B^\beta_{j_aq} )\equiv 0,\quad\forall i_1,j_a,\ a=2,3. \ee
This together with Case (i), the Ricci identities \eqref{2-21} and the fact that $R_{i_1j_aij}\equiv 0$ shows that
$$
B^\alpha_{ij\cdots kj_a}=B^\alpha_{ij\cdots j_ak}+\sum B^\alpha_{qj\cdots}R_{iqkj_a} +\sum B^\alpha_{iq\cdots}R_{jqkj_a}+\cdots-\sum B^\beta_{ij\cdots}R^\bot_{\beta\alpha kj_a}
\equiv 0.
$$
\end{proof}

\begin{lem}\label{lem4.2}It holds that, for all $i_a,j_a,k_a,\cdots,l_a$, $i_b,j_b,\cdots,k_b$ and $1\leq a\neq b\leq 3$,
\begin{align}
&\sum_\alpha B^\alpha_{i_aj_a}B^\alpha_{i_bj_b} =-(\lambda_a+\lambda_b)\delta_{i_aj_a}\delta_{i_bj_b},\label{4-3}\\
&\sum_\alpha B^\alpha_{i_aj_ak_a}B^\alpha_{i_bj_b} =0.\label{4-4}
\end{align}
More generally,
\be\label{4-5}
B^\alpha_{i_aj_ak_a\cdots l_a}B^\alpha_{i_bj_b\cdots k_b} =0,
\ee
where $i_aj_ak_a\cdots l_a$ is a multiple index of order no less than $3$.
\end{lem}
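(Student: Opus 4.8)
The plan is to read off \eqref{4-3} directly from the Gauss equation \eqref{2.15}, and then to obtain \eqref{4-4} and \eqref{4-5} uniformly by covariant differentiation, each time using Lemma \ref{lem4.1} to kill the unwanted terms. The only structural inputs are the product splitting of $(M,g)$, the diagonalization \eqref{4-1}, the vanishing \eqref{4-2}, and Lemma \ref{lem4.1}.

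First I would establish \eqref{4-3}. Since $(M,g)$ splits as the Riemannian product $(M_1,g^{(1)})\times(M_2,g^{(2)})\times(M_3,g^{(3)})$, its curvature tensor is block diagonal, so any component mixing two distinct factors vanishes; in particular $R_{i_ai_bj_aj_b}=0$ for $a\neq b$. Writing this component out via \eqref{2.15} with $(i,j,k,l)=(i_a,i_b,j_a,j_b)$, the quadratic term $\sum_\alpha B^\alpha_{i_aj_b}B^\alpha_{i_bj_a}$ drops out because $B^\alpha_{i_aj_b}=0$ by \eqref{4-2}, while among the four Blaschke terms the two carrying a cross-block Kronecker delta or a cross-block $A$-component vanish by \eqref{4-1}, leaving only $-A_{i_aj_a}\delta_{i_bj_b}-A_{i_bj_b}\delta_{i_aj_a}$. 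Substituting $A_{i_aj_a}=\lambda_a\delta_{i_aj_a}$ and $A_{i_bj_b}=\lambda_b\delta_{i_bj_b}$ and solving the resulting identity yields exactly \eqref{4-3}.

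Next, \eqref{4-4} and \eqref{4-5} I would obtain together by iterating a single differentiation step. Observe that the contraction $\sum_\alpha B^\alpha_{i_aj_a}B^\alpha_{i_bj_b}$ is a scalar-valued tensor on $M$, and because the normal connection is metric it cancels under the $\alpha$-contraction, so the covariant derivative along a direction $E_{m}$ is the plain Leibniz sum $\sum_\alpha(B^\alpha_{i_aj_am}B^\alpha_{i_bj_b}+B^\alpha_{i_aj_a}B^\alpha_{i_bj_bm})$. A general pure-block-$a$ index $B^\alpha_{i_aj_ak_a\cdots l_a}$ is reached from $B^\alpha_{i_aj_a}$ by successively appending block-$a$ derivative indices, and similarly in block $b$. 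Starting from \eqref{4-3} I would append block-$a$ indices to the first factor and block-$b$ indices to the second; at each step the Leibniz term in which the appended index is attached to the factor living in the \emph{other} block vanishes by Lemma \ref{lem4.1} (it mixes two blocks in a derivative of order $\geq 2$). Thus each differentiation merely transfers the new index onto the intended factor, while after the first differentiation the (constant) right-hand side of \eqref{4-3} becomes $0$. By induction on the number of appended indices, every contraction $\sum_\alpha B^\alpha_{P_a}B^\alpha_{Q_b}$ with $P_a$ pure in block $a$, $Q_b$ pure in block $b$, and at least one factor of order exceeding $2$ must vanish; this is precisely \eqref{4-4} and \eqref{4-5}, the symmetry of the contraction letting me take the first factor to be the longer one.

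The step I expect to require the most care is not any isolated computation but the bookkeeping that legitimizes the differentiation arguments: one must track which indices belong to which factor, use that the Levi-Civita connection of the product annihilates the cross-block connection forms $\omega^{j_b}_{i_a}$ (as already exploited in the proof of Lemma \ref{lem4.1}), note that the eigenvalues $\lambda_a$ are constant because $A$ is parallel, and confirm that contracting over the normal index $\alpha$ genuinely commutes with covariant differentiation so that only the two Leibniz terms appear. Once these facts are in place, \eqref{4-3}, \eqref{4-4} and \eqref{4-5} follow with no further input beyond Lemma \ref{lem4.1}.
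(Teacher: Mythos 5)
Your proposal is correct and follows essentially the same route as the paper's proof: \eqref{4-3} is extracted from the M\"obius Gauss equation \eqref{2.15} using the vanishing of the cross-block curvature of the product metric together with \eqref{4-1} and \eqref{4-2}, and then \eqref{4-4} and \eqref{4-5} are obtained by covariant differentiation and induction, with the parallelism of $A$ (constancy of the $\lambda_a$) and Lemma \ref{lem4.1} killing the unwanted Leibniz terms. There is no gap; your bookkeeping remarks (cancellation of the normal connection forms under the $\alpha$-contraction, vanishing of the cross-block connection forms) are exactly the facts the paper's terse argument implicitly relies on.
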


\begin{proof} This lemma mainly comes from the \mo Gauss equation \eqref{2.15} and the parallel assumption of the Blaschke tensor $A$. In fact, since $a\neq b$, \eqref{4-3} is given by \eqref{2.15}, \eqref{4-1}, \eqref{4-2} and that $R_{i_ai_bj_b j_a}\equiv 0$; \eqref{4-4} is given by \eqref{2.15}, \eqref{4-2.1}, $R_{i_ai_bj_b j_a}\equiv 0$ and the parallel of $A$; Finally, \eqref{4-5} can be shown by the method of induction using \eqref{4-4} and Lemma \ref{lem4.1}.
\end{proof}

As the corollary of \eqref{4-3}, we have for $a\neq b$
\begin{align}
&\sum_\alpha B^\alpha_{i_aj_a}(B^\alpha_{i_bi_b}-B^\alpha_{j_bj_b})=0,\label{4-6}\\
&\sum_\alpha B^\alpha_{i_aj_a}B^\alpha_{i_bj_b}=0,\mb{ if }i_a\neq j_a.\label{4-7}
\end{align}

Define
\begin{align}
&V_a=\spn\left\{\sum_\alpha B^\alpha_{i_aj_a\cdots k_a}E_\alpha\right\},\quad a=1,2,3;\\
&V_{a0}=V_a\cap(V_{a'}+V_{a''})^\bot,\quad\mb{ so that }\quad V_{a0}\,\bot\, V_{b0}\ \mb{ for } a\neq b,
\label{4-8}
\end{align}
where, as mentioned earlier, $a,a',a''$ is an even permutation of $1,2,3$.

Let $V'_{a0}$ ($a=1,2,3$) be the orthogonal complement of $V_{a0}$ in $V_a$ and denote
$$V_0:=V'_{10}+V'_{20}+V'_{30}.$$

\begin{lem}\label{lem4.3} It holds that $1\leq\dim V_0\leq 2$.\end{lem}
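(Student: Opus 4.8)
The plan is to pin down each $V'_{a0}$ explicitly and reduce the whole statement to the three ``diagonal'' normal vectors $v_{i_a}:=\sum_\alpha B^\alpha_{i_ai_a}E_\alpha$. First I would show that, modulo $V_{a0}$, the space $V_a$ collapses onto a single such vector, so that $\dim V'_{a0}\le 1$. Indeed, by \eqref{4-7} together with \eqref{4-4} and \eqref{4-5}, every off-diagonal order-two generator $\sum_\alpha B^\alpha_{i_aj_a}E_\alpha$ ($i_a\neq j_a$) and every higher-order generator $\sum_\alpha B^\alpha_{i_aj_a\cdots k_a}E_\alpha$ of $V_a$ is orthogonal to all of $V_{a'}+V_{a''}$, hence already lies in $V_{a0}$; while \eqref{4-6}, combined with the orthogonality of order-two elements of $V_a$ to the higher-order part of the other blocks from \eqref{4-4} and \eqref{4-5}, shows that any two diagonal generators satisfy $v_{i_a}-v_{j_a}\in V_{a0}$. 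Consequently $V_a=V_{a0}+\bbr v_{i_a}$, so $\dim V'_{a0}\le 1$ and $V'_{a0}$ is spanned by $u_a:=\mathrm{proj}_{V'_{a0}}(v_{i_a})$, a vector independent of the chosen index $i_a$ and nonzero precisely when $v_{i_a}\notin V_{a0}$.

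For the upper bound I would exploit the trace condition $\tr B=0$ from \eqref{2.14}. Summing the diagonal generators over each block gives $\zeta_a:=\sum_{i_a}v_{i_a}\in V_a$ with $\zeta_1+\zeta_2+\zeta_3=\tr B=0$. Writing $\zeta_a=\zeta_a^0+\zeta_a'$ along the orthogonal splitting $V_a=V_{a0}\oplus V'_{a0}$ and noting that $V_{a0}\perp V_0$ (since $V_{a0}\perp V'_{a0}$ by definition and $V_{a0}\perp V_b\supseteq V'_{b0}$ for $b\neq a$ by \eqref{4-8}), the orthogonal projection of $\zeta_1+\zeta_2+\zeta_3=0$ onto $V_0$ yields $\zeta_1'+\zeta_2'+\zeta_3'=0$ with $\zeta_a'=m_a u_a$. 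This is a nontrivial linear relation, the coefficients $m_a\ge 1$ being strictly positive, among the vectors spanning $V'_{10},V'_{20},V'_{30}$; hence $V_0=\spn\{u_1,u_2,u_3\}$ has dimension at most $2$.

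For the lower bound I would argue by contradiction: if $V_0=\{0\}$ then every $u_a=0$, i.e. $v_{i_a}\in V_{a0}$ for all $a$, so that $\langle v_{i_a},v_{i_b}\rangle=0$ for $a\neq b$ because $V_{a0}\perp V_b$. But specializing \eqref{4-3} to $j_a=i_a$ and $j_b=i_b$ gives $\langle v_{i_a},v_{i_b}\rangle=-(\lambda_a+\lambda_b)$, which would force $\lambda_a+\lambda_b=0$ for all three pairs and hence $\lambda_1=\lambda_2=\lambda_3=0$, contradicting the hypothesis that the Blaschke eigenvalues are distinct. Therefore $V_0\neq\{0\}$, i.e. $\dim V_0\ge 1$, completing the proof.

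The main obstacle is the bookkeeping in the first paragraph: one must verify that every generator of $V_a$ other than the diagonal second-fundamental-form vectors already lies in $V_{a0}$, which requires invoking \eqref{4-4}--\eqref{4-7} in exactly the right combinations and checking that orthogonality to $V_b$ holds against generators of every covariant order, not merely order two. Once the clean description $V_a=V_{a0}+\bbr v_{i_a}$ with the explicit generator $u_a$ of $V'_{a0}$ is secured, both bounds follow quickly: the upper one from the single trace identity $\tr B=0$, and the lower one from the non-vanishing of $\lambda_a+\lambda_b$ guaranteed by \eqref{4-3} and the distinctness of the eigenvalues.
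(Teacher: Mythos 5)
Your proof is correct and follows essentially the same route as the paper's: you show $\dim V'_{a0}\le 1$ from the orthogonality relations \eqref{4-3}--\eqref{4-7} (the paper's \eqref{4.12}--\eqref{4.13}), get the upper bound by projecting $\tr B=0$ onto $V_0$ to obtain $m_1u_1+m_2u_2+m_3u_3=0$ (the paper's \eqref{4.14}--\eqref{4.17}), and rule out $V_0=\{0\}$ via \eqref{4-3} and the distinctness of $\lambda_1,\lambda_2,\lambda_3$, exactly as in the paper. Your only addition is making explicit that the higher-order generators of $V_a$ lie in $V_{a0}$ by \eqref{4-4} and \eqref{4-5}, a point the paper leaves implicit.
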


\begin{proof}
For any $i,j$, we denote by $B^{V'_{a0}}_{ij}$ the $V'_{a0}$-component of $B_{ij}$, $a=1,2,3$. Then it follows from \eqref{4-6} and \eqref{4-7} that
\be\label{4.12}
B^{V'_{a0}}_{i_aj_a}=0,\quad
B^{V'_{a0}}_{i_ai_a}=B^{V'_{a0}}_{j_aj_a},\ \mb{ for any }\ i_a,j_a,\ i_a\neq j_a.
\ee
So that
\be\label{4.13}
V'_{a0}=\spn\{B^{V'_{a0}}_{i_ai_a}\},\ \mb{for each fixed}\ i_a,\  a=1,2,3.
\ee
In particular, $\dim V'_{a0}\leq 1$, $a=1,2,3$.

On the other hand, by the second equation in \eqref{2.14}, we have
\be\label{4.14}
\sum_{i_1}B_{i_1i_1}+\sum_{i_2}B_{i_2i_2}+\sum_{i_3}B_{i_3i_3}=0.
\ee
But, for any $a$ and $i_a$,
$$
B_{i_ai_a}=B^{V_{a0}}_{i_ai_a}+B^{V'_{a0}}_{i_ai_a}=B^{V_{a0}}_{i_ai_a}+B^{V_0}_{i_ai_a},
$$
and $V_{a0}\,\bot V_{b0}$ for $a\neq b$, so that \eqref{4.14} reduces to
\be\label{4.15}
\sum_{i_a}B^{V_{a0}}_{i_ai_a}=0,\quad a=1,2,3,\quad
\sum_{i_1}B^{V_0}_{i_1i_1}+\sum_{i_2}B^{V_0}_{i_2i_2}+\sum_{i_3}B^{V_0}_{i_3i_3}=0.
\ee
The second equality in \eqref{4.15} together with \eqref{4.12} shows that, for fixed $i_1$, $i_2$ and $i_3$,
\be\label{4.17} m_1B^{V_0}_{i_1i_1}+m_2B^{V_0}_{i_2i_2}+m_3B^{V_0}_{i_3i_3}=0
\ee
which with \eqref{4.13} proves that $\dim V_0\leq 2$.

Finally, if $\dim V_0=0$, then for any $a\neq b$, $B_{i_ai_a}\bot B_{i_bi_b}$ which with \eqref{4-3} imples that $\lambda_a+\lambda_b=0$, contradicting the assumption that $\lambda_1,\lambda_2,\lambda_3$ are distinct.
\end{proof}

Clearly by definition,  $V_0$, $V_{10}$, $V_{20}$ and $V_{30}$ are orthogonal to each other. Denote $\iota:=\dim V_0$. Then we can properly choose an orthonormal normal frame field $\{E_\alpha\}$ such that
\be\label{4-12}
E_{\alpha^\nu}\in V_0,\quad \nu=1,\cdots,\iota;\quad E_{\alpha_a},E_{\beta_a},E_{\gamma_a},\cdots \in V_{a0},\ \mb{ for } a=1,2,3.
\ee

{\lem\label{lem4.4} $V_0$, $V_{10}$, $V_{20}$ and $V_{30}$ are parallel in the \mo normal bundles $V$. In particular, they are all of constant dimension.}

\begin{proof} Let $\xi_a$ and $\xi_0$ be sections of $V$ such that $\xi_a\in V_{a0}$ ($a=1,2,3$) and $\xi_0\in V_0$. Then, by the definition of the subspaces $V_0$, $V_{10}$, $V_{20}$ and $V_{30}$, $\xi_a$ (resp. $\xi_0$) is a linear combination of $B^{V_{a0}}_{i_aj_a\cdots k_a}$ (resp. of $B^{V_0}_{i_1i_1}$, $B^{V_0}_{i_2i_2}$ and $B^{V_0}_{i_3i_3}$ for some $i_1,i_2,i_3$). Thus, by \eqref{4-5}--\eqref{4-7} and \eqref{4.15}, it is not hard to conclude that
\be\label{4.18}\lagl D^\bot\xi_a,\xi_0\ragl_1=\lagl D^\bot\xi_a,\xi_b\ragl_1\equiv 0,\quad b\neq a.\ee
In fact, we take, for example, $a=1$ and $\xi_a=m_1B^{V_{10}}_{11}$. Write
$B_{i_1i_1}=\sum_\alpha B^\alpha_{i_1i_1}E_\alpha$.
Since by \eqref{4.15},
\begin{align*}
m_1B^{V_{10}}_{11}=&(B^{V_{10}}_{11}-B^{V_{10}}_{22}) +\cdots+(B^{V_{10}}_{11}-B^{V_{10}}_{m_1m_1})
=\sum_{\alpha}\left((B^\alpha_{11}-B^\alpha_{22})+\cdots+(B^\alpha_{11}-B^\alpha_{m_1m_1})\right)E_\alpha,
\end{align*}
we have
\begin{align*}
D^\bot\xi_a=&m_1D^\bot B^{V_{10}}_{11}=\sum_{\alpha}\left((dB^\alpha_{11}-dB^\alpha_{22})+\cdots +(dB^\alpha_{11}-dB^\alpha_{m_1m_1})\right)E_\alpha\\ &+\sum_{\alpha,\beta}\left((B^\alpha_{11}-B^\alpha_{22})+\cdots +(B^\alpha_{11}-B^\alpha_{m_1m_1})\right)\omega^\alpha_\beta E_\alpha\\
=&\sum_{i,\alpha}\left((B^\alpha_{11i}-B^\alpha_{22i})+\cdots +(B^\alpha_{11i}-B^\alpha_{m_1m_1i})\right)\omega^iE_\alpha\\
&+2\sum_{i,\alpha} (B^\alpha_{1i}\omega^i_1-B^\alpha_{2i}\omega^i_2)E_\alpha+\cdots
+2\sum_{i,\alpha}(B^\alpha_{1i}\omega^i_1-B^\alpha_{m_1i}\omega^i_{m_1})E_\alpha\\
=&\sum_{i_1,\alpha_1}\left( (B^{\alpha_1}_{11i_1}-B^{\alpha_1}_{22i_1})+\cdots +(B^{\alpha_1}_{11i_1}-B^{\alpha_1}_{m_1m_1i_1})\right)\omega^{i_1}E_{\alpha_1}\\
&+2\sum_{i_1,\alpha_1}(B^{\alpha_1}_{1i_1}\omega^{i_1}_1 -B^{\alpha_1}_{2i_1}\omega^{i_1}_2)E_{\alpha_1}
+\cdots +2\sum_{i_1,\alpha_1}(B^{\alpha_1}_{11}\omega^{i_1}_1 -B^{\alpha_1}_{m_1i_1}\omega^{i_1}_{m_1})E_{\alpha_1}\in V_{10}
\end{align*}
implying that
\eqref{4.18} holds in this case. Other cases can be similarly but more easily considered. Now, from \eqref{4.18} directly follows Lemma \ref{lem4.3}.
\end{proof}

{\rmk\label{rmk4.1}\rm The conclusion that $\dim V_0$ is constant along $M^m$ can also be directly proved as follows:}

For some fixed $i_1,i_2,i_3$, we have by \eqref{4.17}
\be\label{4-19}\sum_a m_a\lagl B^{V_0}_{i_ai_a},B^{V_0}_{i_bi_b}\ragl_1=0,\quad b=1,2,3\ee
which together with \eqref{4-3} shows that $\lagl B^{V_0}_{i_ai_a},B^{V_0}_{i_bi_b}\ragl_1$ are constant for any $a,b$. It then follows from the Lagrangian identity that, for $a\neq b$,
$$
(B^{V_0}_{i_ai_a}\times B^{V_0}_{i_bi_b})^2 =\lagl B^{V_0}_{i_ai_a},B^{V_0}_{i_ai_a}\ragl_1\lagl B^{V_0}_{i_bi_b},B^{V_0}_{i_bi_b}\ragl_1-\lagl B^{V_0}_{i_ai_a},B^{V_0}_{i_bi_b}\ragl_1^2=\const.
$$
On the other hand, by \eqref{4.17}, $B^{V_0}_{i_1i_1}$ is parallel to $B^{V_0}_{i_2i_2}$ if and only if $B^{V_0}_{i_1i_1}$, $B^{V_0}_{i_2i_2}$, $B^{V_0}_{i_3i_3}$ are parallel to each other, that is, $\dim V_0=1$. The remark is proved.

Hence there are only two cases that need to be considered:

Case 1. $\dim V_0=2$. In this case, we can find two special indices $\alpha_0$ and $\alpha'_0$ such that
\be\label{4.20}
B^{V_0}_{i_1i_1}=B^0_1E_{\alpha_0},\quad B^{V_0}_{i_ai_a}=B^0_aE_{\alpha_0}+B^0_a{}'E_{\alpha'_0},\ \mb{ for any } i_a,\quad a=2,3,
\ee
with
\be\label{4-22} B^0_1B^0_2{}'\neq 0.\ee

{\lem\label{lem4.5} $B^0_1$ and $B^0_2,B^0_2{}',B^0_3,B^0_3{}'$ are constants.}

\begin{proof} First, \eqref{2.14} and \eqref{4.20} give that
\be\label{4-23}
m_1B^0_1+m_2B^0_2+m_3B^0_3=m_2B^0_2{}'+m_3B^0_3{}'=0.
\ee

On the other hand, by \eqref{4-3}, \eqref{4-19} and \eqref{4.20}, we find
\begin{align*}
&m_1(B^0_1)^2=m_2(\lambda_1+\lambda_2)+m_3(\lambda_1+\lambda_3)=\const,\\ &m_2((B^0_2)^2+(B^0_2{}')^2)=m_1(\lambda_1+\lambda_2)+m_3(\lambda_2+\lambda_3)=\const,\\
&m_3((B^0_3)^2+(B^0_3{}')^2)=m_1(\lambda_1+\lambda_3)+m_2(\lambda_2+\lambda_3)=\const.
\end{align*}
This with \eqref{4-3} and \eqref{4-23} easily shows that $B^0_1$ and $B^0_2,B^0_2{}',B^0_3,B^0_3{}'$ are all constants.
\end{proof}

\begin{lem}\label{lem4-2} For the \mo normal frame field $\{E_\alpha\}\equiv\{E_{\alpha_a},E_{\alpha_0},E_{\alpha'_0}\}$ chosen above, the \mo normal connection forms $\omega^\beta_\alpha$ satisfy
\be\label{4-14}
\omega^\beta_{\alpha_0}=\omega^\beta_{\alpha'_0}=\omega^{\beta_b}_{\alpha_a}=0,\quad a\neq b.
\ee
\end{lem}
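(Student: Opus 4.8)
The plan is to read every form in \eqref{4-14} as a component of the normal covariant derivative $D^\bot$ and to deduce the required vanishings from the fact, established in Lemma \ref{lem4.4}, that the mutually orthogonal subbundles $V_0,V_{10},V_{20},V_{30}$ are all $D^\bot$-parallel. Recall from \eqref{2-8} that the normal connection forms of the orthonormal frame $\{E_\alpha\}$ satisfy $\omega^\beta_\alpha=\lagl D^\bot E_\alpha,E_\beta\ragl_1$ (the induced metric on $V$ is positive definite), so that $\omega^\beta_\alpha=0$ is equivalent to $D^\bot E_\alpha$ having no $E_\beta$-component.

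For the forms coupling two distinct blocks this is immediate. Since the frame in \eqref{4-12} is adapted to the orthogonal splitting $V=V_0\oplus V_{10}\oplus V_{20}\oplus V_{30}$ and each summand is parallel, $D^\bot E_{\alpha_0}$ and $D^\bot E_{\alpha'_0}$ remain in $V_0$, which is orthogonal to every $V_{b0}$; hence $\omega^{\beta_b}_{\alpha_0}=\omega^{\beta_b}_{\alpha'_0}=0$. Likewise $D^\bot E_{\alpha_a}\in V_{a0}\perp V_{b0}$ for $a\neq b$, giving $\omega^{\beta_b}_{\alpha_a}=0$. This disposes of all forms in \eqref{4-14} joining different blocks.

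The remaining, and I expect the only delicate, point is the vanishing of the forms internal to $V_0$, namely $\omega^{\alpha'_0}_{\alpha_0}$; equivalently, I must show that $E_{\alpha_0}$ and $E_{\alpha'_0}$ are themselves parallel, not merely that $V_0$ is parallel. Here I would invoke Lemma \ref{lem4.5}: the section $B^{V_0}_{i_1i_1}$ is globally defined (independent of $i_1$ by \eqref{4.12}) and equals $B^0_1E_{\alpha_0}$ with $B^0_1$ a nonzero constant, so $B^0_1\,D^\bot E_{\alpha_0}=D^\bot B^{V_0}_{i_1i_1}$. Because $V_0$ is parallel, the right-hand side is the $V_0$-projection of $D^\bot B_{i_1i_1}$; and a short computation of $D^\bot B_{i_1i_1}$, using Lemma \ref{lem4.1} to kill the derivatives $B^\alpha_{i_1i_1k}$ with $k$ outside the first factor and using the product structure of $(M,g)$ to kill the tangential forms $\omega^{l_a}_{i_1}$ with $a\neq1$, shows that $D^\bot B_{i_1i_1}$ lies in $V_1$ (each surviving contribution is one of the spanning vectors $\sum_\alpha B^\alpha_{i_1j_1k_1}E_\alpha$ or $\sum_\alpha B^\alpha_{l_1i_1}E_\alpha$ defining $V_1$). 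Its $V_0$-part therefore lies in $V_1\cap V_0=V'_{10}=\bbr E_{\alpha_0}$ by \eqref{4.13}, so $D^\bot E_{\alpha_0}$ is proportional to $E_{\alpha_0}$ and, $|E_{\alpha_0}|$ being constant, must vanish. Thus $E_{\alpha_0}$ is parallel; since $D^\bot E_{\alpha'_0}\in V_0$ and, by skew-symmetry, $\omega^{\alpha_0}_{\alpha'_0}=-\omega^{\alpha'_0}_{\alpha_0}=0$, the field $E_{\alpha'_0}$ is parallel as well. Combining the two paragraphs yields $\omega^\beta_{\alpha_0}=\omega^\beta_{\alpha'_0}=\omega^{\beta_b}_{\alpha_a}=0$ for $a\neq b$, which is \eqref{4-14}. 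The main obstacle is precisely the bookkeeping in the computation of $D^\bot B_{i_1i_1}$ and the verification that its normal part stays trapped in $V_1$.
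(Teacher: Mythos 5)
Your argument is correct, and it follows the paper's skeleton: Lemma \ref{lem4.4} kills every connection form joining two of the mutually orthogonal parallel subbundles, so the whole lemma reduces to the single internal form $\omega^{\alpha'_0}_{\alpha_0}$, which is then forced to vanish by differentiating $B_{i_1i_1}$ and using that its $E_{\alpha_0}$-coefficient is the nonzero constant $B^0_1$ (Lemma \ref{lem4.5} and \eqref{4-22}). Where you genuinely differ from the paper is the justification of that last step. The paper works componentwise: by \eqref{4-5} it gets $\sum_j B^{\alpha'_0}_{i_1i_1j}\omega^j=0$, expands this covariant derivative by definition, and—using $B^{\alpha'_0}_{i_1i_1}=0$, the product structure of $g$, and $\omega^{\alpha'_0}_{\alpha_a}=0$ from Lemma \ref{lem4.4}—is left with exactly $B^{\alpha_0}_{i_1i_1}\omega^{\alpha'_0}_{\alpha_0}=B^0_1\omega^{\alpha'_0}_{\alpha_0}=0$. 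You argue invariantly instead: $D^\bot B_{i_1i_1}\in V_1$ (needing only Lemma \ref{lem4.1}, the vanishing of the mixed tangential forms $\omega^{j_b}_{i_1}$, and the definition of $V_1$, whose spanning set already contains the second- and third-order vectors that survive), the parallelism of $V_0$ lets $D^\bot$ commute with the orthogonal projection onto $V_0$, and since $V_1=V_{10}\oplus V'_{10}$ with $V_{10}\perp V_0$ and $V'_{10}\subset V_0$, that projection of $V_1$ lands in $V_1\cap V_0=V'_{10}=\bbr E_{\alpha_0}$; hence $B^0_1D^\bot E_{\alpha_0}$ is proportional to $E_{\alpha_0}$ and vanishes because $|E_{\alpha_0}|\equiv 1$. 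What your route buys: it never invokes the orthogonality relations \eqref{4-4}--\eqref{4-5} of Lemma \ref{lem4.2} at this step (the paper needs them, since $E_{\alpha'_0}$ lies in the span of $B^{V_0}_{i_2i_2},B^{V_0}_{i_3i_3}\subset V_2+V_3$), and it delivers the slightly stronger conclusion that $E_{\alpha_0}$ and $E_{\alpha'_0}$ are individually parallel sections of $V$, from which all of \eqref{4-14} follows at once. What the paper's version buys is brevity: \eqref{4-5} hands it the key orthogonality in one line, with no need to check, as you implicitly must, that the $V_0$-projection of a vector of $V_1$ stays inside $V_1\cap V_0$.
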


\begin{proof}
Due to Lemma \ref{lem4.4}, \eqref{4-12} and \eqref{4.20}, we only need to show that $\omega^{\alpha'_0}_{\alpha_0}=0$. In fact, by \eqref{4-5}, we know that $\sum B^{\alpha'_0}_{i_1i_1j}\omega^j=0$ which with Lemma \ref{lem4.5} and \eqref{4.20} shows that
$$
0=dB^{\alpha'_0}_{i_1i_1}-B^{\alpha'_0}_{ji_1}\omega^j_{i_1}-B^{\alpha'_0}_{i_1 j}\omega^j_{i_1}+B^{\alpha_0}_{i_1i_1}\omega^{\alpha'_0}_{\alpha_0} +B^{\alpha_1}_{i_1i_1}\omega^{\alpha'_0}_{\alpha_1}=B^{\alpha_0}_{i_1i_1}\omega^{\alpha'_0}_{\alpha_0},
$$
where we have used $\omega^{\alpha'_0}_{\alpha_a}=0$, $a=1,2,3$, which are directly obtained by Lemma \ref{lem4.4}. Thus $\omega^{\alpha'_0}_{\alpha_0}=0$.
\end{proof}

Let $Y$ and $N$ be the \mo position vector and the \mo biposition vector of $x$, respectively. As done earlier by many authors (for example, \cite{lwz}, \cite{lw1}, \cite{lz05}, \cite{zhw}) and the very recent paper \cite{ls3}, we define another vector-valued function
\be\label{4-15}
{\mathbf c}:=N+\lambda Y+{\mu}_0E_{\alpha_0}+{\mu}'_0E_{\alpha'_0}
\ee
for some constants $\lambda$, ${\mu}_0$ and ${\mu}'_0$ to be determined. Then by using \eqref{2-6} and \eqref{2-8} we find
$$
d{\mathbf c}=\sum_{a,i_a}(\lambda_a+\lambda-{\mu}_0B^0_a-{\mu}'_0B^0_a{}')\omega^{i_a}Y_{i_a}
$$
with $B^0_1{}'=0$. Note that, by \eqref{4-23}
$$
\ldt 1&B^0_1&0\\1&B^0_2&B^0_2{}'\\1&B^0_3&B^0_3{}'\rdt =B^0_2{}'(B^0_1-B^0_3)+B^0_3{}'(B^0_2-B^0_1)
=\fr1{m_3}B^0_1 B^0_2{}'(m_1+m_2+m_3)=\fr m{m_3}B^0_1 B^0_2{}'\neq 0,
$$
the system of linear equations
\be\label{4.25}\left\{
\begin{aligned}&\lambda_1+\lambda-{\mu}_0B^0_1=0,\\
&\lambda_2+\lambda-{\mu}_0B^0_2-{\mu}'_0B^0_2{}'=0,\\
&\lambda_3+\lambda-{\mu}_0B^0_3-{\mu}'_0B^0_3{}'=0,\end{aligned}\right.
\ee
for $\lambda,{\mu}_0,{\mu}'_0$ has a unique solution as
\be\label{4.27}\left\{\begin{aligned}
&\lambda=-\fr{m_1\lambda_1+m_2\lambda_2+m_3\lambda_3}{m},\\
&{\mu}_0=-\fr1{mB^0_1}\left(\sum m_a\lambda_a-m\lambda_1\right),\\
&{\mu}'_0=-\fr1{mB^0_1B^0_2{}'}\left((m_1B^0_1+(m_2+m_3)B^0_2)\lambda_1\right.\\
&\hs{1cm}+\left.((m_1+m_3)B^0_1+m_2B^0_2)\lambda_2+m_3(B^0_1-B^0_2)\lambda_3\right).
\end{aligned}\right.\ee
Thus the following lemma is proved:

\begin{lem}\label{lem4-3} Let $\lambda$, ${\mu}_0$ and ${\mu}_0'$ be given by \eqref{4.27}. Then the vector-valued function ${\mathbf c}$ defined by \eqref{4-15} is constant on $M^m$ and
\be\label{4-17}
\lagl {\mathbf c},{\mathbf c}\ragl=2\lambda+{\mu}_0^2+{\mu}'_0{}^2,\quad \lagl {\mathbf c},Y\ragl=1.
\ee
\end{lem}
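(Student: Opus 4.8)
The plan is to show that $\mathbf{c}$ is constant by differentiating, and then to compute the two inner products directly. For the first claim, I would start from the defining equation \eqref{4-15} and apply the structure equations \eqref{2-6} and \eqref{2-8}. Using $dN=\sum A_{ij}\omega^jY_i+C^\alpha_i\omega^iE_\alpha$ together with the vanishing of the \mo form $C\equiv 0$, the term $dN$ reduces to $\sum A_{ij}\omega^jY_i$, which by the diagonalization \eqref{4-1} equals $\sum_{a,i_a}\lambda_a\omega^{i_a}Y_{i_a}$. From $dY=\sum Y_i\omega^i$ we get $\lambda\,dY=\sum_{a,i_a}\lambda\,\omega^{i_a}Y_{i_a}$. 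For the frame differentials $dE_{\alpha_0}$ and $dE_{\alpha'_0}$, I would invoke \eqref{2-8}, noting that the coefficient of $Y$ involves $C^\alpha_i$ (which vanishes) and the $E_\beta$-terms vanish by Lemma \ref{lem4-2}, leaving only $-\sum B^{\alpha_0}_{ij}\omega^jY_i$ and $-\sum B^{\alpha'_0}_{ij}\omega^jY_i$. By the structure \eqref{4.20} these are $-\sum_{a,i_a}B^0_a\omega^{i_a}Y_{i_a}$ and $-\sum_{a,i_a}B^0_a{}'\omega^{i_a}Y_{i_a}$ respectively (with $B^0_1{}'=0$).

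Assembling these contributions, the differential $d\mathbf{c}$ collects to
\[
d\mathbf{c}=\sum_{a,i_a}\bigl(\lambda_a+\lambda-\mu_0B^0_a-\mu'_0B^0_a{}'\bigr)\omega^{i_a}Y_{i_a},
\]
which is exactly the expression recorded just before \eqref{4.25}. The constants $\lambda,\mu_0,\mu'_0$ were precisely chosen in \eqref{4.25} to annihilate every coefficient; since the determinant computed there is $\frac{m}{m_3}B^0_1B^0_2{}'\neq 0$ (using $B^0_1B^0_2{}'\neq0$ from \eqref{4-22}), the linear system has the unique solution \eqref{4.27}. Hence $d\mathbf{c}\equiv 0$ and $\mathbf{c}$ is constant on $M^m$.

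For the inner products \eqref{4-17}, I would use the orthogonality relations \eqref{2.5}, namely $\lagl Y,Y\ragl_1=\lagl N,N\ragl_1=0$ and $\lagl Y,N\ragl_1=1$, together with the facts that $Y$ and $N$ are orthogonal to the \mo normal bundle $V$, that $\{E_{\alpha_0},E_{\alpha'_0}\}$ are unit and mutually orthogonal sections of $V$, and that $\lagl Y,E_{\alpha_0}\ragl_1=\lagl Y,E_{\alpha'_0}\ragl_1=0$. Expanding $\lagl\mathbf{c},\mathbf{c}\ragl_1$ from \eqref{4-15}: the cross term $2\lambda\lagl N,Y\ragl_1=2\lambda$, while $\lagl N,N\ragl_1=\lambda^2\lagl Y,Y\ragl_1=0$ and the only surviving normal contributions are $\mu_0^2\lagl E_{\alpha_0},E_{\alpha_0}\ragl_1+\mu'_0{}^2\lagl E_{\alpha'_0},E_{\alpha'_0}\ragl_1=\mu_0^2+\mu'_0{}^2$. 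Similarly $\lagl\mathbf{c},Y\ragl_1=\lagl N,Y\ragl_1=1$, all other terms dropping by orthogonality.

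The main obstacle, and the only place requiring care, is the step $dE_{\alpha_0}=-\sum B^{\alpha_0}_{ij}\omega^jY_i$ and its $\alpha'_0$-analogue: one must verify that all the $E_\beta$-terms in \eqref{2-8} genuinely vanish. This is not automatic from $C\equiv 0$ alone but rests on Lemma \ref{lem4-2}, which gives $\omega^\beta_{\alpha_0}=\omega^\beta_{\alpha'_0}=0$, in particular $\omega^{\alpha'_0}_{\alpha_0}=0$. Once this parallelism of the distinguished normal directions is in hand, the computation is a straightforward bookkeeping of the structure equations, and no further analytic input is needed.
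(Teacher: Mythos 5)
Your proposal is correct and follows essentially the same route as the paper: the paper also obtains $d{\mathbf c}=\sum_{a,i_a}(\lambda_a+\lambda-{\mu}_0B^0_a-{\mu}'_0B^0_a{}')\omega^{i_a}Y_{i_a}$ from \eqref{2-6}, \eqref{2-8} (with $C\equiv 0$, \eqref{4-1}, \eqref{4.20} and Lemma \ref{lem4-2} supplying the vanishing of the $E_\beta$-terms), and then kills every coefficient by the unique solution \eqref{4.27} of the system \eqref{4.25}, the inner products in \eqref{4-17} being immediate from \eqref{2.5} and the orthonormality of $E_{\alpha_0},E_{\alpha'_0}$ in $V$. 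Your explicit flagging of $\omega^\beta_{\alpha_0}=\omega^\beta_{\alpha'_0}=0$ as the key input is exactly the point the paper settles in Lemma \ref{lem4-2} before stating this lemma.
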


Next we have to consider the following three subcases:

Subcase (1): ${\mathbf c}$ is time-like; Subcase (2): ${\mathbf c}$ is light-like; Subcase (3): ${\mathbf c}$ is space-like.

Since the argument that follows here is standard and same as that of \cite{ls3} (see Case 1 there), we omit the detail of it and only state the corresponding conclusions:

{\prop\label{prop4.1} Let $x:M^m\to \bbs^{m+p}$ be as in the main theorem (Theorem \ref{main}). If $\dim V_0=2$ and ${\mathbf c}$ is the constant vector given by \eqref{4-15}, then

$(1)$ ${\mathbf c}$ is time-like and $x$ is \mo equivalent to a non-minimal and umbilic-free pseudo-parallel immersion $\td x:M^m\to \bbs^{m+p}$ with parallel mean curvature and constant scalar curvature, which has three distinct principal curvatures in the direction of the mean curvature vector;

$(2)$ ${\mathbf c}$ is light-like and $x$ is \mo equivalent to the image under $\sigma$ of a non-minimal and umbilic-free pseudo-parallel immersion $\bar x:M^m\to \bbr^{m+p}$ with parallel mean curvature and constant scalar curvature, which has three distinct principal curvatures in the direction of the mean curvature vector;

$(3)$ ${\mathbf c}$ is space-like and $x$ is \mo equivalent to the image under $\tau$ of a non-minimal and umbilic-free pseudo-parallel immersion $\bar x:M^m\to \bbh^{m+p}$ with parallel mean curvature and constant scalar curvature, which has three distinct principal curvatures in the direction of the mean curvature vector.}

Case 2. $\dim V_0=1$.

In this case, there is an index $\alpha_0$ such that $V_0=\bbr E_{\alpha_0}$. Thus we can write
\be\label{4.29}
B^{V_0}_{i_ai_a}=B^0_a E_{\alpha_0},\ \mb{ for each }\ i_a,\quad a=1,2,3.
\ee
It follows that
\be\label{4.30}
m_1B^0_1+m_2B^0_2+m_3B^0_3=0,\quad B^0_aB^0_b=-(\lambda_a+\lambda_b),\ \mb{ for }\ a\neq b\ee
which implies
\be\label{4.31}
(B^0_a)^2=\fr1{m_a}((m_{a'}+m_{a''})\lambda_a+m_{a'}\lambda_{a'}+m_{a''}\lambda_{a''}),\quad a=1,2,3
\ee
where $a,a',a''$ is an even permutation of $1,2,3$.

Furthermore, Lemma \ref{lem4.4} implies in the present case that
\be\label{4.32}
\omega^{\alpha_a}_{\alpha_0}=\omega^{\alpha_b}_{\alpha_a}\equiv 0,\ \mb{ for all $a$ and }b\neq a.
\ee

Define
\be\label{4.33}
z_a=N+\lambda_a Y-B^0_aE_{\alpha_0},\quad a=1,2,3.
\ee
Then, by \eqref{2-6}, \eqref{2-8} and \eqref{4.32}, we find that
\be\label{4.39}
dz_a=\sum_{i,j}A_{ij}\omega^jY_i+\lambda_a\sum_i\omega^iY_i+B^0_a\sum_{i,j}B^{\alpha_0}_{ij}\omega^jY_i =(2\lambda_a+(B^0_a)^2)\sum_{i_a}\omega^{i_a}Y_{i_a},\quad a=1,2,3.
\ee
Thus $z_a$ is constant on $M_b$ for $b\neq a$.

Using \eqref{2-7}, \eqref{2-8}, \eqref{4.32} and \eqref{4.39}, the following lemma is easily proved:

\begin{lem}\label{lem4-4} The subbundles $\bbr z_a$, $Y_*(TM_a)$, $\bbr E_{\alpha_0}$, $V_{a0}$, $a=1,2,3$, are mutually orthogonal, and the \mo normal connection on the \mo normal bundle $V$ is the direct sum of its restrictions on $\bbr E_{\alpha_0}$, $V_{a0}$, $a=1,2,3$. Moreover,
$$\bbr z_a\oplus Y_*(TM_a)\oplus V_{a0},\quad a=1,2,3$$
are orthogonal to each other in $\bbr^{m+p+2}_1$ and are constant on $M_a$, $a=1,2,3$, respectively.
\end{lem}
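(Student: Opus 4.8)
The plan is to read everything off the structure equations \eqref{2-7}, \eqref{2-8}, the first derivative \eqref{4.39} already in hand, and the three structural inputs special to Case~2: the block vanishing \eqref{4-2} of $B$, the $V_0$-description \eqref{4.29}--\eqref{4.30}, and the splitting \eqref{4.32} of the \mo normal connection, all combined with the moving-frame relations \eqref{2.5} together with $\lagl Y_i,Y_j\ragl_1=\delta_{ij}$ and $\lagl Y_i,E_\alpha\ragl_1=\lagl Y,E_\alpha\ragl_1=\lagl N,E_\alpha\ragl_1=0$. I would organize the proof into the orthogonality relations, the splitting of the normal connection, and the constancy of the three bundles $W_a:=\bbr z_a\oplus Y_*(TM_a)\oplus V_{a0}$.

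For orthogonality the only nontrivial pairing is $\lagl z_a,z_b\ragl_1$ with $a\neq b$: expanding \eqref{4.33} and using \eqref{2.5} with $\lagl E_{\alpha_0},E_{\alpha_0}\ragl_1=1$ gives $\lagl z_a,z_b\ragl_1=\lambda_a+\lambda_b+B^0_aB^0_b$, which vanishes by the second relation in \eqref{4.30}. Every other orthogonality is immediate: $z_a\bot Y_*(TM_b)$ and $z_a\bot V_{b0}$ because $N,Y\bot Y_i$ and $N,Y\bot E_\alpha$ while $E_{\alpha_0}\bot V_{b0}$; $Y_*(TM_a)\bot Y_*(TM_b)$ from the Riemannian product structure of $(M^m,g)$; $\bbr E_{\alpha_0}\bot V_{a0}$ since $V_{a0}\subset V_0^\bot$; $V_{a0}\bot V_{b0}$ for $a\neq b$ by \eqref{4-8}; and $Y_*(TM_a)\bot V$ by the frame relations. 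In particular $W_1,W_2,W_3$ are mutually orthogonal, and the dimension count $\sum_a(1+m_a+\dim V_{a0})=3+m+(p-1)=m+p+2$ shows $\bbr^{m+p+2}_1=W_1\oplus W_2\oplus W_3$. The splitting of the normal connection is nothing but Lemma \ref{lem4.4} in the case $\dim V_0=1$: the vanishing $\omega^{\alpha_a}_{\alpha_0}=\omega^{\alpha_b}_{\alpha_a}=0$ of \eqref{4.32} says exactly that $D^\bot$ leaves each of $\bbr E_{\alpha_0},V_{10},V_{20},V_{30}$ invariant, so the \mo normal connection is the direct sum of its restrictions to these subbundles.

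The heart of the argument is the constancy of $W_a$, which I would get by differentiating its three types of spanning sections and checking each derivative stays in $W_a$. By \eqref{4.39}, $dz_a$ is a multiple of $\sum_{i_a}\omega^{i_a}Y_{i_a}\in Y_*(TM_a)$. For $Y_{i_a}$ I would feed \eqref{2-7} the facts $A_{i_aj}=\lambda_a\delta_{i_aj}$ from \eqref{4-1}, the product-connection vanishing $\omega^{j_b}_{i_a}=0$, the block vanishing $B^\alpha_{i_aj_b}=0$ from \eqref{4-2}, and the diagonal value $B^{V_0}_{i_ai_a}=B^0_aE_{\alpha_0}$ from \eqref{4.29}; the $Y$-, $N$- and $E_{\alpha_0}$-terms then recombine exactly into $-\omega^{i_a}(N+\lambda_aY-B^0_aE_{\alpha_0})=-\omega^{i_a}z_a$, leaving $dY_{i_a}=-\omega^{i_a}z_a+\sum_{j_a}\omega^{j_a}_{i_a}Y_{j_a}+\sum_{j_a}B^{V_{a0}}_{i_aj_a}\omega^{j_a}\in W_a$. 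Finally, for $E_\beta\in V_{a0}$ I would use \eqref{2-8} with $C\equiv0$; the tangential term collapses to $-\sum_{i_a,j_a}B^\beta_{i_aj_a}\omega^{j_a}Y_{i_a}\in Y_*(TM_a)$ because $B^\beta_{i_cj_c}=\lagl B_{i_cj_c},E_\beta\ragl_1=0$ for $c\neq a$ (since $B_{i_cj_c}\in V_c\bot V_{a0}$), while the normal term $\sum_\gamma\omega^\gamma_\beta E_\gamma$ stays in $V_{a0}$ by the previous step. Hence $dz_a,dY_{i_a},dE_\beta\in W_a$ in every tangent direction, so each $W_a$ is parallel in the flat $\bbr^{m+p+2}_1$, i.e. a fixed subspace; moreover $dz_a$ and $dY_{i_a}$ involve only $\omega^{i_a}$, so these sections vary only along $M_a$.

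I expect the recombination in $dY_{i_a}$ to be the one genuine obstacle: it is precisely here that the defining formula \eqref{4.33} of $z_a$ is forced, and one must verify that the $A$-term, the $N$-term and the $E_{\alpha_0}$-part of the $B$-term assemble with exactly the coefficients $\lambda_a$ and $B^0_a$ appearing in $z_a$. Everything else reduces to bookkeeping with the block-diagonal structure \eqref{4-1}--\eqref{4-2} and the connection splitting \eqref{4.32}.
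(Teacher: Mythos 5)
Your proof is correct and takes exactly the route the paper intends: the paper's entire ``proof'' of this lemma is the one-line remark that it follows from \eqref{2-7}, \eqref{2-8}, \eqref{4.32} and \eqref{4.39}, and your argument is precisely the expansion of that remark, with the recombination $dY_{i_a}=-\omega^{i_a}z_a+\sum\omega^{j_a}_{i_a}Y_{j_a}+\sum B^{V_{a0}}_{i_aj_a}\omega^{j_a}$ (forced by \eqref{4-1}, \eqref{4-2}, \eqref{4.29}) being, as you say, the only step with real content.

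Two caveats are worth recording. First, your parenthetical dimension count asserting $\bbr^{m+p+2}_1=W_1\oplus W_2\oplus W_3$ uses $\sum_a\dim V_{a0}=p-1$, i.e.\ $V=\bbr E_{\alpha_0}\oplus V_{10}\oplus V_{20}\oplus V_{30}$; this is not established at this point of the paper (it amounts to saying that $B$ and its covariant derivatives span the whole \mo normal bundle, which needs a separate reduction-of-codimension argument from linear fullness: a parallel complement of $\bbr E_{\alpha_0}\oplus\bigoplus_a V_{a0}$ on which $B$ vanishes would force $Y$ into a fixed proper subspace). Since the lemma does not claim this and nothing else in your proof uses it, you should drop or flag that sentence. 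Second, you tacitly --- and correctly --- omit one pairing that the lemma's first sentence literally asserts: $\bbr z_a$ is \emph{not} orthogonal to $\bbr E_{\alpha_0}$, because \eqref{2.5}, \eqref{2.6} and \eqref{4.33} give $\lagl z_a,E_{\alpha_0}\ragl_1=-B^0_a$, and no $B^0_a$ can vanish in Case 2 (if $B^0_a=0$, then \eqref{4.30} forces $\lambda_{a'}=\lambda_{a''}=-\lambda_a$, contradicting the distinctness of the Blaschke eigenvalues). So the lemma as stated is an overstatement; the true and subsequently used content is exactly what you prove: the splitting of the \mo normal connection, and the mutual orthogonality and constancy (as fixed subspaces, the strongest reading of ``constant on $M_a$'') of the three blocks $\bbr z_a\oplus Y_*(TM_a)\oplus V_{a0}$.
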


Subcase (i): One of $2\lambda_1+(B^0_1)^2$, $2\lambda_2+(B^0_2)^2$ and $2\lambda_3+(B^0_3)^2$ vanishes.

Without loss of generality, we assume $2\lambda_1+(B^0_1)^2=0$. Then by \eqref{4.39}, $dz_1\equiv 0$ and thus $z_1={\mathbf c}$ is a constant vector on $M^m$. Furthermore,
$$
\lagl {\mathbf c},{\mathbf c}\ragl_1=2\lambda_1+(B^0_1)^2=0,\quad \lagl {\mathbf c},Y\ragl_1=1.
$$
Therefore the according argument in \cite{ls3} (see Subcase (ii) of Case 1 there) applies to the present case and proves the following conclusion:

{\prop\label{prop4.2} Let $x:M^m\to \bbs^{m+p}$ be as in the main theorem (Theorem \ref{main}). If $\dim V_0=1$ and there exists some $a$, $1\leq a\leq 3$, such that $2\lambda_a+(B^0_a)^2=0$, then $x$ is \mo equivalent to

$(2)$ the image under $\sigma$ of a non-minimal and umbilic-free pseudo-parallel immersion $\bar x:M\to \bbr^{m+p}$ with parallel mean curvature and constant scalar curvature, which has three distinct principal curvatures in the direction of the mean curvature vector.}

Subcase (ii): $2\lambda_a+(B^0_a)^2\neq 0$, $a=1,2,3$.

In this subcase, by using \eqref{4.30} and \eqref{4.31} the following lemma can be easily proved by a direct computation:

{\lem\label{lem} The three constants $B^0_a$, $a=1,2,3$, have the properties that
\be\label{4.35}
\sum_a\fr1{2\lambda_a+(B^0_a)^2}=\sum_a\fr{B^0_a}{2\lambda_a+(B^0_a)^2}=0,\quad \sum_a\fr{\lambda_a}{2\lambda_a+(B^0_a)^2}=1.
\ee}

\begin{rmk}\rm Note that $\bbr E_{\alpha_0}$, $\bigoplus_a (Y_*(TM_a)\oplus V_{a0})$ are space-like, and
\be\label{4.36}
\lagl z_a,z_a\ragl_1=2\lambda_a+(B^0_a)^2\neq 0,\quad a=1,2,3.
\ee
It follows that there exists one and only one index $a$ such that
$$\lagl z_a,z_a\ragl_1<0,\ \mb{ or equivalently, }\ 2\lambda_a+(B^0_a)^2<0.$$
With no loss of generality we assume that
\be\label{4.37}
r^2_1:=-\fr1{2\lambda_1+(B^0_1)^2},\quad r^2_a:=\fr1{2\lambda_a+(B^0_a)^2},\quad a=2,3
\ee
for positive numbers $r_1,r_2,r_3$. Then by \eqref{4.30}, \eqref{4.31} and \eqref{4.35} we have
\be\label{4.38}
r^2_1=r^2_2+r^2_3,\quad m_3r^2_2\neq m_2r^2_3.
\ee
\end{rmk}

Now from the \mo second fundamental form $B$, we define for each $a$
$$
\Baa=\sum B^{\alpha_a}_{i_aj_a}\omega^{i_a}\omega^{j_a}E_{\alpha_a}.
$$
Then $\Baa$ is a $V_{a0}$-valued symmetric $2$-form on $M_a$ with components $\Baa^{\alpha_a}_{i_aj_a}=B^{\alpha_a}_{i_aj_a}$.

Let $\Baa^{\alpha_a}_{i_aj_a,k_a}$ be the components of the covariant derivatives of $\Baa$ with the induced connection on $V_{a0}$. Then, as the consequence of \eqref{4-2}, \eqref{4.32} and
Lemma \ref{lem4-4}, we have
\be\label{4-36}
\Baa^{\alpha_a}_{i_aj_a,k_a}=B^{\alpha_a}_{i_aj_ak_a}.
\ee

Since $B^{\alpha_b}_{i_aj_a}=0$ for $b\neq a$, the vanishing of the \mo form $C$ together with \eqref{2.15}, \eqref{2-16}, \eqref{2.18}, \eqref{4-1}, \eqref{4-2} and \eqref{4-36} proves the following lemma:

\begin{lem}\label{lem4-5}
The Riemannian manifold $(M_a,g^{(a)})$ and the vector bundle valued symmetric tensor $\Baa$ satisfies the Gauss equation, Codazzi equation and Ricci equation for submanifolds in a space form of constant curvature $2\lambda_a+(B^0_a)^2$. Namely
\begin{align}
&R_{i_aj_ak_al_a}=\sum (\Baa^{\alpha_a}_{i_al_a}\Baa^{\alpha_a}_{j_ak_a} -\Baa^{\alpha_a}_{i_ak_a}\Baa^{\alpha_a}_{j_al_a}) +(2\lambda_a+(B^0_a)^2)(\delta_{i_al_a}\delta_{j_ak_a}-\delta_{i_ak_a}\delta_{j_al_a}),\label{4-37}\\
&\Baa^{\alpha_a}_{i_aj_a,k_a}=\Baa^{\alpha_a}_{i_ak_a,j_a},\quad
R^\bot_{\alpha_a\beta_ai_aj_a}=\sum (\Baa^{\alpha_a}_{j_ak_a}\Baa^{\beta_a}_{i_ak_a}-\Baa^{\alpha_a}_{i_ak_a}\Baa^{\beta_a}_{j_ak_a}).
\end{align}
\end{lem}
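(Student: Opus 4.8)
The plan is to obtain all three equations by restricting the three Möbius structural equations \eqref{2.15}, \eqref{2.18} and \eqref{2-16} to indices lying entirely in the $a$-th block and then simplifying with the structural data assembled in Case 2. Two preliminary identifications make the statement meaningful. First, since $(M,g)$ is the Riemannian product of the factors $(M_c,g^{(c)})$, the mixed curvature components vanish, so $R_{i_aj_ak_al_a}$ is precisely the intrinsic curvature of $(M_a,g^{(a)})$. Second, by Lemma \ref{lem4.4} the subbundle $V_{a0}$ is parallel and the Möbius normal connection splits as a direct sum; hence $R^\bot_{\alpha_a\beta_ai_aj_a}$, restricted to $V_{a0}$-indices and block-$a$ tangent indices, is the curvature of the connection induced on $V_{a0}\to M_a$, while \eqref{4-36} identifies the covariant derivative $\Baa^{\alpha_a}_{i_aj_a,k_a}$ with the Möbius derivative $B^{\alpha_a}_{i_aj_ak_a}$.

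For the Gauss equation I would place all four indices in block $a$ in \eqref{2.15}. The Blaschke terms collapse via \eqref{4-1}, since $A_{i_aj_a}=\lambda_a\delta_{i_aj_a}$ turns them into $2\lambda_a(\delta_{i_al_a}\delta_{j_ak_a}-\delta_{i_ak_a}\delta_{j_al_a})$. The remaining task is to split the quadratic $B$-term over the adapted normal frame $\{E_{\alpha_0},E_{\alpha_c}\}$. Because $B_{i_aj_a}\in V_a$ is orthogonal to every $V_{b0}$ with $b\neq a$, the $V_{b0}$-contributions drop out; the $V_0$-contribution equals $(B^0_a)^2(\delta_{i_al_a}\delta_{j_ak_a}-\delta_{i_ak_a}\delta_{j_al_a})$ by \eqref{4.29}; and the $V_{a0}$-contribution is exactly the $\Baa$-quadratic term in \eqref{4-37}. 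Adding the two $\delta$-terms produces the constant curvature $2\lambda_a+(B^0_a)^2$, which gives \eqref{4-37}.

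The Codazzi equation is immediate: \eqref{2.18} with $C\equiv0$ yields $B^{\alpha_a}_{i_aj_ak_a}=B^{\alpha_a}_{i_ak_aj_a}$, and \eqref{4-36} rewrites this as $\Baa^{\alpha_a}_{i_aj_a,k_a}=\Baa^{\alpha_a}_{i_ak_a,j_a}$. For the Ricci equation I would start from \eqref{2-16} with normal indices $\alpha_a,\beta_a\in V_{a0}$ and tangent indices $i_a,j_a$, and restrict the summation index $k$: by \eqref{4-2} only the terms with $k=k_a$ survive, so the right-hand side becomes $\sum_{k_a}(\Baa^{\alpha_a}_{j_ak_a}\Baa^{\beta_a}_{i_ak_a}-\Baa^{\alpha_a}_{i_ak_a}\Baa^{\beta_a}_{j_ak_a})$, as claimed.

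I do not expect a genuine obstacle; once the framework is in place the derivation is bookkeeping via sum-splitting. The one point requiring care is the pair of identifications in the first paragraph, namely that the \emph{ambient} Möbius tangent and normal curvatures reduce, on block-$a$ indices, to the \emph{intrinsic} curvature of $(M_a,g^{(a)})$ and the induced curvature of $V_{a0}\to M_a$. This rests on the product structure of $(M,g)$ and on the parallelism and direct-sum splitting furnished by Lemma \ref{lem4.4}; both are already available, so after invoking them the three equations follow by the manipulations above.
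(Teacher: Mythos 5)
Your proposal is correct and takes essentially the same route as the paper: the paper obtains the lemma exactly by restricting the M\"obius Gauss, Codazzi and Ricci equations \eqref{2.15}, \eqref{2.18}, \eqref{2-16} to block-$a$ indices, using \eqref{4-1}, \eqref{4-2}, \eqref{4-36}, the vanishing of $C$, and $B^{\alpha_b}_{i_aj_a}=0$ for $b\neq a$. Your write-up merely makes explicit the sum-splitting over the adapted normal frame and the product/parallel-splitting identifications that the paper treats as immediate.
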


By Lemma \ref{lem4-5}, there exist an isometric immersion
$$\td y\equiv(\td y_0,\td y_1):(M_1,g^{(1)})\to\bbh^{m_1+p_1}\left(-\fr1{r^2_1}\right)\subset\bbr^{m_1+p_1+1}_1$$
with $\Ba$ as its second fundamental form, and two isometric immersions
$$\td y_a:(M_a,g^{(a)})\to\bbs^{m_a+p_a}(r_a)\subset\bbr^{m_a+p_a+1},\quad a=2,3$$
with $\Baa$ as their second fundamental forms, respectively.

Note that $B^{\alpha_b}_{i_aj_a}\equiv 0$ for $b\neq a$. It follows from \eqref{2.14} that both $\td y$ and $\td y_a$, $a=2,3$, are minimal immersions. Furthermore, if denote by $\td S_a$ the scalar curvatures of $M_a$, then by \eqref{4.37}, \eqref{4-37} and the minimality, we have
\be\label{4-39}
 \td S_1=-\fr{m_1(m_1-1)}{r^2_1}-\sum (B^{\alpha_1}_{i_1j_1})^2,\quad\td S_a=\fr{m_a(m_a-1)}{r^2_a}-\sum (B^{\alpha_a}_{i_aj_a})^2,\quad a=2,3
\ee
showing that
\begin{align}
\td S_1+\fr{m_1(m_1-1)}{r^2_1}=&-\sum(B^{\alpha_1}_{i_1j_1})^2 \leq 0,\label{4-39.1}\\
\td S_a-\fr{m_a(m_a-1)}{r^2_a}=&-\sum(B^{\alpha_a}_{i_aj_a})^2 \leq 0,\quad a=2,3.\label{4-39.2}
\end{align}

On the other hand, by \eqref{2.14},
\be\label{4.46}
\sum_{a,i_a,j_a}(B^{\alpha_a}_{i_aj_a})^2=\sum_{\alpha,i,j}(B^\alpha_{ij})^2 -\sum_am_a(B^0_a)^2=\fr {m-1}m-\sum_am_a(B^0_a)^2=\const.
\ee
Thus by \eqref{4-39} and \eqref{4.46},
\begin{align}
\td S_1+\td S_2+\td S_3=&-\fr{m_1(m_1-1)}{r_1^2}+\fr{m_2(m_2-1)}{r_2^2}+\fr{m_3(m_3-1)}{r_3^2}\nnm\\
&-\fr{m-1}{m}+m_1(B^0_1)^2 +m_2(B^0_2)^2+m_3(B^0_3)^2\nnm\\
=&\const.
\label{S1+S2+S3}
\end{align}

Since $\td S_a$'s are functions defined on $M_a$'s, respectively, it follows that all $\td S_a$'s are constant on $M^m$ and, by \eqref{4-39.1}, \eqref{4-39.2}, we can write
\begin{align}
\td S_1=&-\fr{m_1(m_1-1)}{r^2_1}+{\mu}_1\left(m_1(B^0_1)^2
+m_2(B^0_2)^2+m_3(B^0_3)^2-\fr{m-1}{m}\right)\label{4.47}\\
\td S_a=&\fr{m_a(m_a-1)}{r^2_a}+{\mu}_a\left(m_a(B^0_1)^2
+m_2(B^0_2)^2+m_3(B^0_3)^2-\fr{m-1}{m}\right),\quad a=2,3\label{4.48}
\end{align}
for some positive constants ${\mu}_1,{\mu}_2,{\mu}_3$ satisfying ${\mu}_1+{\mu}_2+{\mu}_3=1$.

Now let ${\rm LS}(\fkm,\fkp,\fkr,{\mu})$ be one of the submanifolds in Example \ref{expl3.2} defined by $\td y$, $\td y_2$ and $\td y_3$. Then it is not hard to see that ${\rm LS}(\fkm,\fkp,\fkr,{\mu})$ has the same \mo metric $g$ and the same \mo second fundamental form $B$ as those of $x$. Furthermore, by choosing the normal frame field $\{\td e_\alpha\}$ as given in \eqref{3-10}--\eqref{3-11.2} where, in the present case,
$$\bar E_\alpha=E_\alpha,\quad m+1\leq \alpha\leq m+p,$$
we compute directly:
$$
\td\omega^\beta_\alpha=d\td e_\alpha\cdot\td e_\beta=\lagl dE_\alpha,E_\beta\ragl_1=\begin{cases}\omega^\beta_\alpha,& \mb{for either } m+1\leq\alpha,\beta\leq m+p_1,\\
&\mb{or } m+p_1+1\leq\alpha,\beta\leq m+p_1+p_2,\\
&\mb{or } m+p_1+p_2+1\leq\alpha,\beta\leq m+p;\\\\
0,& otherwise,
\end{cases}
$$
implying that $x$ and ${\rm LS}(\fkm,\fkp,\fkr,{\mu})$ have the same \mo normal connection. Therefore, by Theorem \ref{wth}, $x$ is \mo equivalent to ${\rm LS}(\fkm,\fkp,\fkr,{\mu})$. So we have proved the following proposition:

{\prop\label{prop4.3} Let $x:M^m\to \bbs^{m+p}$ be as in the main theorem (Theorem \ref{main}). If $\dim V_0=1$ and $2\lambda_a+(B^0_a)^2\neq 0$, $a=1,2,3$, then $x$ is \mo equivalent to

$(4)$ a submanifold ${\rm LS}(\fkm,\fkp,\fkr,{\mu})$ given in Example \ref{expl3.2} for some multiple parameters $\fkm,\fkp,\fkr,{\mu}$ satisfying $m_3r^2_2\neq m_2r^2_3$.}

\vs{.5cm}
{\em The proof of the main theorem (Theorem \ref{main})}.

As discussed earlier in this section, there are only the following two cases with additional subcases that need to be considered:

(1) $\dim V_0=2$.

(2) $\dim V_0=1$:

Subcase (i), one of $2\lambda_a+(B^0_a)^2$ ($a=1,2,3$) vanishes;

Subcase (ii), $2\lambda_a+(B^0_a)^2\neq 0$, $a=1,2,3$.

Thus the main theorem follows directly from Propositions \ref{prop4.1}, \ref{prop4.2} and \ref{prop4.3}.

\end{document}